\newtheorem{thm}{Theorem}[section]
\newtheorem{prop}[thm]{Proposition}
\newtheorem{cor}[thm]{Corollary}
\newtheorem{lem}[thm]{Lemma}
\theoremstyle{definition}
\newtheorem{defn}[thm]{Definition}
\theoremstyle{remark}
\newtheorem{Rmk}[thm]{Remark}
\numberwithin{equation}{section}
\newcommand{\Z}{\mathbb{Z}}
\newcommand{\Q}{\mathbb{Q}}
\newcommand{\A}{\mathbb{A}}
\newcommand{\g}{\mathfrak{g}}
\newcommand{\Hom}{\mathrm{Hom}}
\newcommand{\Ht}{{\rm ht}}
\newcommand{\Ind}{{\rm Ind}}
\newcommand{\Res}{{\rm Res}}
\newcommand{\gmod}{{\rm \text{-}gmod}}
\newcommand{\proj}{\text{-}\mathrm{proj}}
\newcommand{\Mod}{\text{-}\mathrm{Mod}}
\newcommand{\im}{\mathrm{Im}}
\newcommand{\rlQ}{\mathsf{Q}}   % root lattice
\newcommand{\wlP}{\mathsf{P}}   % weight lattice
\newcommand{\weyl}{\mathsf{W}}  % Weyl group
\newcommand{\cmA}{\mathsf{A}}  % Cartan matrix
\newcommand{\tf}{\tilde{f}}  % tilde f
\newcommand{\te}{\tilde{e}}  % tilde e
\newcommand{\wt}{\mathrm{wt}} 		% weight
\newcommand{\ep}{\varepsilon}  		% epsilon_i
\newcommand{\ph}{\varphi}  		% phi_i
\newcommand\Aqq[1][{\mathfrak g}]{U_{\A}^{-}(#1)^\vee}
\newcommand{\rD}{\Delta}  		% roots
\newcommand{\ST}{\mathsf{ST}}   % standard tablueax
\newcommand{\res}{\mathsf{res}}   % residue sequance
\newcommand{\sg}{\mathfrak{S}}   % symmetric group
\newcommand{\bR}{\mathbf{k}}   % base ring
\newcommand{\conv}{{\mathbin{\scalebox{1.1}{$\mspace{1.5mu}\circ\mspace{1.5mu}$}}}}
\newcommand{\hd}{{\mathrm{hd}}}      					 %head
\newcommand{\lan}{\langle} 	%left anlge
\newcommand{\ran}{\rangle}	%right angle
\newcommand{\Par}{\mathfrak{P}}					% set of partition
\newcommand{\Sp}{\mathcal{S}}					% Simple module
\newcommand{\Dy}{\mathcal{D}}
\newcommand{\Ix}{\mathsf{I}}
\newcommand{\La}{\Lambda}
\newcommand{\la}{\lambda}
\newcommand{\cB}{\mathbb{B}}
\newcommand{\Pa}{\mathcal{P}}
\newcommand{\Cb}{\mathcal{C}}
\newcommand{\bp}{\mathsf{p}}
\newcommand{\id}{\mathrm{id}}
\newcommand{\Bs}[1] %Box1
{
\xy
(-2.5,2.5)*{}; (2.5,2.5)*{} **\dir{-};
(-2.5,-2.5)*{}; (2.5,-2.5)*{} **\dir{-};
(-2.5,2.5)*{}; (-2.5,-2.5)*{} **\dir{-};
(2.5,2.5)*{}; (2.5,-2.5)*{} **\dir{-};
(0,0)*{#1};
\endxy
}
\newcommand{\Bl}[1] %Box1
{
\xy
(-5,2.5)*{}; (5,2.5)*{} **\dir{-};
(-5,-2.5)*{}; (5,-2.5)*{} **\dir{-};
(-5,2.5)*{}; (-5,-2.5)*{} **\dir{-};
(5,2.5)*{}; (5,-2.5)*{} **\dir{-};
(0,0)*{#1};
\endxy
}
\newcommand{\CrystalC}
{
\xymatrix{
 \Bs{\overline{n}}   & \ar[l]_n  \cdots  & \ar[l]_{3}
\Bs{\overline{2}} & \ar[l]_{ 2} \Bs{\overline{1}}  & \ar[l]_1
\Bs{1} & \ar[l]_{2} \Bs{2} & \ar[l]_{ 3}
 \cdots & \ar[l]_{n} \Bs{n}
}
}
\newcommand{\DynkinA}
{
\xy
(0.5,0)*{};(7.5,0)*{} **\dir{-};
(8.5,0)*{};(12.5,0)*{} **\dir{-};
(12.5,0)*{};(24.5,0)*{} **\dir{.};
(24.5,0)*{};(28.5,0)*{} **\dir{-};
(29.5,0)*{}; (36.5,0)*{} **\dir{-};
(0,0)*{\circ}; (8,0)*{\circ};
(29,0)*{\circ}; (37,0)*{\circ};
(0,-3)*{1}; (8,-3)*{2};
(29,-3)*{n-1};(37,-3)*{n};
(-8,-1)*{A_n\ : \ }
\endxy
}
\newcommand{\DynkinB}
{
\xy
{\ar@{<=} (0.5,0)*{}; (7.5,0)*{}};
%(0.5,0)*{};(7.5,0)*{} **\dir{-};
(8.5,0)*{};(12.5,0)*{} **\dir{-};
(12.5,0)*{};(24.5,0)*{} **\dir{.};
(24.5,0)*{};(28.5,0)*{} **\dir{-};
(29.5,0)*{};(36.5,0)*{} **\dir{-};
%{\ar@{=>} (29.5,0)*{}; (36.5,0)*{}};
(0,0)*{\circ}; (8,0)*{\bullet};
(29,0)*{\bullet}; (37,0)*{\bullet};
(0,-3)*{1}; (8,-3)*{2};
(29,-3)*{n-1};(37,-3)*{n};
(-8,-1)*{B_n\ : \ }
\endxy
}
\newcommand{\DynkinC}
{
\xy
{\ar@{=>} (0.5,0)*{}; (7.5,0)*{}};
%(0.5,0)*{};(7.5,0)*{} **\dir{-};
(8.5,0)*{};(12.5,0)*{} **\dir{-};
(12.5,0)*{};(24.5,0)*{} **\dir{.};
(24.5,0)*{};(28.5,0)*{} **\dir{-};
(29.5,0)*{};(36.5,0)*{} **\dir{-};
%{\ar@{<=} (29.5,0)*{}; (36.5,0)*{}};
(0,0)*{\bullet}; (8,0)*{\bullet};
(29,0)*{\bullet}; (37,0)*{\circ};
(0,-3)*{1}; (8,-3)*{2};
(29,-3)*{n-1};(37,-3)*{n};
(-8,-1)*{C_n\ : \ }
\endxy
}
\newcommand{\DynkinD}
{
\xy
(0,2.5)*{}; (5.1,0.7)*{} **\dir{-};
(0,-2.5)*{}; (5.1,-0.5)*{} **\dir{-};
(5.5,0)*{};(12.5,0)*{} **\dir{-};
(13.5,0)*{};(20.5,0)*{} **\dir{-};
(20.5,0)*{};(27.5,0)*{} **\dir{.};
(27.5,0)*{};(34.5,0)*{} **\dir{-};
%(29.5,0.5)*{};(33.5,2.5)*{} **\dir{-};
%(29.2,-0.3)*{};(33.5,-2.5)*{} **\dir{-};
(5,0)*{\bullet}; (13,0)*{\bullet}; (28,0)*{\bullet};
(35,0)*{\circ}; (-0.5,2.5)*{\circ}; (-0.5,-2.8)*{\circ};
(5,-3)*{3}; (13,-3)*{4};
(35,-3)*{n};(-1.3,0.5)*{1}; (-1.3,-5)*{2};
(-8,-1)*{D_n\ : \ }
\endxy
}
\newcommand{\DynkinEs}
{
\xy
(0.5,0)*{};(7.5,0)*{} **\dir{-};
(8.5,0)*{};(15.5,0)*{} **\dir{-};
(16.5,0)*{};(23.5,0)*{} **\dir{-};
(24.5,0)*{};(31.5,0)*{} **\dir{-};
(16,0.5)*{};(16,5.5)*{} **\dir{-};
(0,0)*{\circ}; (8,0)*{\bullet}; (16,0)*{\bullet};
(24,0)*{\bullet}; (32,0)*{\circ};
(16,6)*{\bullet};
(0,-3)*{1}; (8,-3)*{2}; (16,-3)*{3};
(24,-3)*{5}; (32,-3)*{6};
(14,5)*{4};
(-8,-1)*{E_6\ : \ }
\endxy
}
\newcommand{\DynkinEsv}
{
\xy
(0.5,0)*{};(7.5,0)*{} **\dir{-};
(8.5,0)*{};(15.5,0)*{} **\dir{-};
(16.5,0)*{};(23.5,0)*{} **\dir{-};
(24.5,0)*{};(31.5,0)*{} **\dir{-};
(32.5,0)*{};(39.5,0)*{} **\dir{-};
(16,0.5)*{};(16,5.5)*{} **\dir{-};
(0,0)*{\bullet}; (8,0)*{\bullet}; (16,0)*{\bullet};
(24,0)*{\bullet}; (32,0)*{\bullet};
(40,0)*{\circ};
(16,6)*{\bullet};
(0,-3)*{1}; (8,-3)*{2}; (16,-3)*{3};
(24,-3)*{5}; (32,-3)*{6}; (40,-3)*{7};
(14,5)*{4};
(-8,-1)*{E_7\ : \ }
\endxy
}
\newcommand{\DynkinF}
{
\xy
(0.5,0)*{};(7.5,0)*{} **\dir{-};
{\ar@{=>} (8.5,0)*{}; (15.5,0)*{}};
(16.5,0)*{};(23.5,0)*{} **\dir{-};
(0,0)*{\circ}; (8,0)*{\circ};
(16,0)*{\circ}; (24,0)*{\circ};
(0,-3)*{1}; (8,-3)*{2};
(16,-3)*{3};(24,-3)*{4};
\endxy
}
\begin{document}

\title[Cyclotomic quiver Hecke algebras and minuscule representations]
{Cyclotomic quiver Hecke algebras corresponding to minuscule representations}

\author[Euiyong Park]{Euiyong Park}
\thanks{The research was supported by the National Research Foundation of Korea (NRF) Grant funded by the Korean Government (NRF-2017R1A1A1A05001058).}
\address{Department of Mathematics, University of Seoul, Seoul 02504, Korea}
\email{epark@uos.ac.kr}

%\date{\today}
%\date{April 1, 2020}
\subjclass[2010]{16G10,17B37,	05E10}
\keywords{Categorification, Minuscule representations, Quantum groups, Quiver Hecke algebras}

\begin{abstract}
In the paper, we give an explicit basis of the cyclotomic quiver Hecke algebra corresponding to a minuscule representation of finite type.  
\end{abstract}

\maketitle

%\tableofcontents

\vskip 2em

\section*{Introduction}

The quiver Hecke algebras (also known as Khovanov-Lauda-Rouquier algebras) were introduced by Khovanov and Lauda \cite{KL09, KL11} and independently Rouquier \cite{R08} to categorfy the half of a quantum group.
For a dominant integral weight $\La$, the quiver Hecke algebras have special quotient algebras $R^\La$ which give  a categorification of the irreducible highest weight module $V(\La)$ \cite{KK11}.
In the viewpoint of categorification, these algebras have been the focus of many studies and various new features were discovered.

In the paper, we give an explicit basis of the cyclotomic quiver Hecke algebra corresponding to a \emph{minuscule representation} in terms of crystals.  
A minuscule representation is an irreducible highest weight $U_q(\g)$-module $V(\La_i)$ such that the Weyl group acts transitively on its weight space. 
The possible types for minuscule representations are $A_n$, $B_n$, $C_n$, $D_n$, $E_6$ and $E_7$, and the nontrivial highest weights of minuscule representations are marked by $\circ$ in $\eqref{Eq: Dynkin}$. 
Let $V(\La_i)$ be a minuscule representation. Since every weight space $V(\La_i)_\xi$ is extremal, the corresponding cyclotomic quiver Hecke algebra $R^{\La_i}(\xi)$ is simple.
Thus, since every irreducible $R$-module is absolutely irreducible,
 we obtain an explicit basis of $R^{\La_i}(\xi)$ by constructing a simple $R^{\La_i}(\xi)$-module combinatorially in terms of the crystal $B(\La_i)$.
Our main tool is a \emph{homogeneous representation}, which was introduced by Kleshchev and Ram \cite{KR10}. In \cite{KR10},  they gave a combinatorial construction of homogeneous representations of simply-laced type and investigated a connection with \emph{fully commutative elements} including dominant minuscule elements. 
The homogeneous representations of type $A_n$ and $D_n$ are classified and enumerated in \cite{FL18A, FL18B} 
by studying fully commutative elements.  

For simply-laced $ADE$ type, essential parts for the main theorem come from the results of \cite{KR10}.
Let $b \in B(\La_i)$,  $\xi =  \wt(b)$ and $\Pa(b)$ the set of all paths from $b_{\La_i}$ to $b$ on the crystal $B(\La_i)$ (see $\eqref{Eq: paths}$).
We show that $\Pa(b)$ is in 1-1 correspondence to the set of reduced expressions of the minuscule element $w(b)$, 
which says that $\Pa(b)$ has a homogeneous $R^{\La_i}(\xi)$-module structure by the result of  \cite{KR10} (see Theorem \ref{Thm: main ADE}).  
For type $C_n$, since a simple $R^{\La_n}(\xi)$-module is 1-dimensional for all weights $\xi$, we easily obtain that the algebra $R^{\La_n}(\xi)$ is trivial (see Proposition \ref{Prop: main C}).
The main novelty of this paper lies in the case of type $B_n$. Note that, since $B_n$ is not simply-laced, we cannot use the result of \cite{KR10} directly. 
In type $B_n$, we show that there is a 1-1 correspondence between 
the set $\Par_n$ of all strict partitions $\la = (\la_1, \la_2, \ldots )$ with $\la_1 \le n$ and the Weyl group orbit $\weyl\cdot \La_1$. We then 
give a homogeneous $R^{\La_1}(\xi)$-module structure on the set $\ST(\la)$ of standard tableaux of shape $\la$ using combinatorics of shifted Young tableaux. 
Thus, we obtain a $\bR$-basis of $R^{\La_1}(\xi)$ indexed by the set of all pairs of shifted Young tableaux (see Theorem \ref{Thm: main B}).
Remark \ref{Rmk: ST} tells us that shifted Young tableaux also can be regarded as paths on the crystal $B(\La_1)$. As a corollary,
we show that there exist algebra isomorphisms
$R^{\La_1}_{B_n}(m) \simeq R^{\La_1}_{D_{n+1}}(m) \simeq R^{\La_2}_{D_{n+1}}(m)$, where $R^{\La_i}_X(m)$ is the cyclotomic quiver Hecke algebra of type $X$  (see Corollary \ref{Cor: isom}).

This paper is organized as follows:
In Section \ref{Sec: minuscule repn}, we review briefly minuscule representations of quantum groups of finite types.
In Section \ref{Sec: CQHA for ACDE}, we review cyclotomic quiver Hecke algebras and show an explicit basis of $R^{\La_i}(\xi)$ corresponding to minuscule representations for finite types $ADE$ and $C$.
In Section \ref{Sec: Irreducible repn}, we construct simple $R^{\La_1}$-module of type $B_n$ using combinatorics of shifted Young tableaux and show an explicit basis of $R^{\La_1}(\xi)$.

\vskip 2em
\noindent
{\bf Acknowledgments.}
The author would like to thank Myungho Kim for fruitful discussions.
The author also would like to thank the anonymous reviewers for their valuable comments and suggestions.

\section{Minuscule representations} \label{Sec: minuscule repn}

\subsection{Quantum groups}
Let $I = \{ 1, 2, \ldots, n \}$. A quintuple $ (\cmA,\wlP,\Pi,\wlP^\vee,\Pi^\vee) $ is called a {\it Cartan datum} if it
consists of
\begin{enumerate}
\item[(a)] a generalized \emph{Cartan matrix} $\cmA=(a_{ij})_{i,j\in I}$, 
\item[(b)] a free abelian group $\wlP$, called the {\em weight lattice},
\item[(c)] $\Pi = \{ \alpha_i \mid i\in I \} \subset \wlP$,
called the set of {\em simple roots},
\item[(d)] $\wlP^{\vee}=
\Hom_{\Z}( \wlP, \Z )$, called the \emph{coweight lattice},
\item[(e)] $\Pi^{\vee} =\{ h_i \in \wlP^\vee \mid i\in I\}$, called the set of {\em simple coroots} 
\end{enumerate}
satisfying the following:
\begin{enumerate}
\item[(i)] $\lan h_i, \alpha_j \ran = a_{ij}$ for $i,j \in I$,
\item[(ii)] $\Pi$ is linearly independent over $\Q$,
\item[(iii)] for each $i\in I$, there exists $\Lambda_i \in \wlP$, called the \emph{fundamental weight}, such that $\lan h_j,\Lambda_i \ran =\delta_{j,i}$ for all $j \in I$.
\end{enumerate}

There is a nondegenerate symmetric bilinear form $( \cdot \, , \cdot )$ on $\wlP$ satisfying 
\begin{equation*}
(\alpha_i,\alpha_j)=\mathsf{d_i} a_{ij}\qquad \text{and} \qquad  \lan h_i,  \lambda\ran = \dfrac{2 (\alpha_i,\lambda)}{(\alpha_i,\alpha_i)},
\end{equation*}
where $\mathsf d_i$'s are relatively prime integers such that ${\rm diag}(\mathsf d_i \mid i \in I) \cdot \cmA$ is symmetric.
Set $ \rlQ := \bigoplus_{i \in I} \Z \alpha_i$ and  $\rlQ_+ := \sum_{i\in I} \Z_{\ge 0} \alpha_i$ and define $\Ht (\beta)=\sum_{i \in I} k_i $  for $\beta=\sum_{i \in I} k_i \alpha_i \in \rlQ_+$.
We denote by $\wlP_+: =\{ \lambda \in \wlP \mid \lan h_i, \lambda\ran \ge 0 \ \text{for all }  \ i \in I \}$ the set of \emph{dominant integral weights}. 

Let $U_q(\g)$ be the \emph{quantum group} associated with the Cartan datum $(\cmA, \wlP,\wlP^\vee, \Pi, \Pi^{\vee})$, which
is a $\Q(q)$-algebra generated by $f_i$, $e_i$ $(i\in I)$ and $q^h$ $(h\in \wlP^\vee)$ with certain defining relations (see \cite[Chater 3]{HK02} for details).
For $\lambda \in \wlP_+$, we set $V(\lambda)$ to be the irreducible highest weight $U_q(\g)$-module with highest weight $\lambda$ and denote by $B(\lambda)$ its crystal 
(see \cite{Kas90,Kas91}, \cite[Chapter 4]{HK02} for the definition). 
Let $\A = \Z[q,q^{-1}]$. We denote by $U^-_\A(\g)$ and $V_\A(\la)$ the $\A$-lattices of $U_q^-(\g)$ and $V_q(\la)$ respectively.

\subsection{Minuscule representations} \label{Sec: minuscule rep}

We write $\rD$ for the set of all \emph{roots} associated with $\cmA$, and denote by $\weyl$  the \emph{Weyl group}, which is the subgroup of $\mathrm{Aut}(\wlP)$ generated by  
$ s_i(\lambda) := \lambda - \langle h_i, \lambda \rangle \alpha_i  $ for $i\in I$. 
When $w\in \weyl$ is written as $ w = s_{i_1} \ldots s_{i_\ell}$  ($\ell$ minimal), we call the expression \emph{reduced} and write $\ell(w) = \ell$.
An element $w$ is \emph{fully commutative} if for every pair of noncommuting generators $s_i$ and $ s_j$ there is no reduced expression for $w$ containing a subword of length $m$ of the form $s_is_js_is_j \ldots$, where $m$ denotes the order of $s_is_j$ in $\weyl$.

For $\la\in \wlP_+$, an element $w \in \weyl$ is $\la$-\emph{minuscule} if there exists a reduced expression $w = s_{i_1} \cdots s_{i_\ell} $ such that 
\begin{align} \label{Eq: def of minuscule}
\langle h_{i_k},  s_{i_{k+1}} s_{i_{k+2}} \cdots s_{i_{\ell}} \la   \rangle = 1 \qquad \text{ for } k=1,\ldots, \ell.
\end{align}
A weight $\lambda \in \wlP$ is called \emph{minuscule} if $\langle \alpha^\vee, \lambda \rangle \in \{ 0,\pm1\}$ for all $\alpha\in \rD$, where $\alpha^\vee$ is the coroot corresponding to $\alpha$. 
Irreducible highest weight modules corresponding to the dominant minuscule weights are called \emph{minuscule representations}.
In the following Dynkin diagrams, the dominant minuscule weights $\Lambda_i$ are marked by $\circ$ (See \cite[Chapter VIII, $\S$ 7.3]{Bourbaki789} for details). 
\begin{equation} \label{Eq: Dynkin}
\begin{aligned}
\begin{tabular}{ l l l  }
   \DynkinA & & \DynkinB  \\
  \DynkinC & & \DynkinD \\
  \DynkinEs  & & \DynkinEsv  \\
\end{tabular}
\end{aligned}
\end{equation}
\vskip 1em 

We set $\Dy :=\{  A_n,B_n,C_n,D_n, E_6, E_7  \} $ and, for $X \in \Dy$, denote by $\Ix_{X}$ the set of all indices marked by $\circ$ in the above Dynkin diagram $\eqref{Eq: Dynkin}$ of type $X$. 

For a dominant minuscule weight $\La_i$, the set of vertices of the crystal $B(\La_i)$ can be identified with the Weyl group orbit $\weyl \cdot \La_i$.
Set $\cB(\La_i) := \weyl \cdot \La_i$ and define 
\begin{align*}
& \ep_i (\mu) := \max\{0, - \langle h_i, \mu \rangle \}, \qquad\qquad\quad  \ph_i (\mu) := \max\{0, \langle h_i, \mu \rangle \}, \\
& \tf_i(\mu) := 
\begin{cases}
\mu - \alpha_i  &\text{ if } \langle h_i, \mu \rangle=1,  \\
0 &\text{ otherwise, }
\end{cases}
\qquad 
\te_i(\mu) := 
\begin{cases}
\mu + \alpha_i  &\text{ if } \langle h_i, \mu \rangle=-1,  \\
0 &\text{ otherwise, }
\end{cases}
\end{align*}
for $\mu\in \cB(\La_i)$.
Then one can show that $\cB(\La_i)$ is a crystal and it is isomorphic to the crystal $B(\La_i)$.

\vskip 2em

\section{Cyclotomic quiver Hecke algebras and minuscule weights} \label{Sec: CQHA for ACDE}

\subsection{Quiver Hecke algebras} \label{Sec: QHA}

Let $\mathbf k$ be a field and $(\cmA,\wlP, \Pi,\wlP^{\vee},\Pi^{\vee})$ be a Cartan datum.
 Choose polynomials 
 $$
Q_{i,j}(u,v) =
\delta_{i,j}  \sum\limits_{ \substack{ (p,q)\in \Z^2_{\ge0} 
\\ (\alpha_i , \alpha_i)p+(\alpha_j , \alpha_j)q=-2(\alpha_i , \alpha_j)}}
t_{i,j;p,q} u^p v^q \in \bR [u,v]
$$
with $t_{i,j;p,q}\in{\mathbf k}$, $t_{i,j;p,q}=t_{j,i;q,p}$ and $t_{i,j:-a_{ij},0} \in {\mathbf k}^{\times}$.
Note that $Q_{i,j}(u,v)=Q_{j,i}(v,u)$ for $i,j\in I$. 
Let
${\mathfrak{S}}_{n} = \langle s_1, \ldots, s_{n-1} \rangle$ be the symmetric group
on $n$ letters with the action of  ${\mathfrak{S}}_n$ on $I^n$ by place permutation.
For  $\beta \in \rlQ_+$ with $\Ht(\beta) = n$, we set
$$I^{\beta} = \{\nu = (\nu_1, \ldots, \nu_n) \in I^{n}\mid \alpha_{\nu_1} + \cdots + \alpha_{\nu_n} = \beta\}.$$

\begin{defn}
For $\beta \in \rlQ_+$ with $\Ht(\beta)=n$, the \emph{quiver Hecke algebra}  
$R(\beta)$  associated
with $\{ Q_{i,j} \}_{i,j \in I}$ is the ${\mathbf k}$-algebra generated by $\{ e(\nu) \}_{\nu \in  I^{\beta}}$, $ \{x_k \}_{1 \le
k \le n}$, $\{ \tau_m \}_{1 \le m \le n-1}$ satisfying the following defining relations:
\begin{align*} %\label{eq:KLR}
& e(\nu) e(\nu') = \delta_{\nu, \nu'} e(\nu), \ \
\sum_{\nu \in  I^{\beta} } e(\nu) = 1, \allowdisplaybreaks\\
& x_{k} x_{m} = x_{m} x_{k}, \ \ x_{k} e(\nu) = e(\nu) x_{k}, \allowdisplaybreaks\\
& \tau_{m} e(\nu) = e(s_{m}(\nu)) \tau_{m}, \ \ \tau_{k} \tau_{m} =
\tau_{m} \tau_{k} \ \ \text{if} \ |k-m|>1, \allowdisplaybreaks\\
& \tau_{k}^2 e(\nu) = Q_{\nu_{k}, \nu_{k+1}} (x_{k}, x_{k+1})
e(\nu), \allowdisplaybreaks\\
& (\tau_{k} x_{m} - x_{s_k(m)} \tau_{k}) e(\nu) = \begin{cases}
-e(\nu) \ \ & \text{if} \ m=k, \nu_{k} = \nu_{k+1}, \\
e(\nu) \ \ & \text{if} \ m=k+1, \nu_{k}=\nu_{k+1}, \\
0 \ \ & \text{otherwise},
\end{cases} \allowdisplaybreaks\\
& (\tau_{k+1} \tau_{k} \tau_{k+1}-\tau_{k} \tau_{k+1} \tau_{k}) e(\nu)\\
& =\begin{cases} \dfrac{Q_{\nu_{k}, \nu_{k+1}}(x_{k},
x_{k+1}) - Q_{\nu_{k}, \nu_{k+1}}(x_{k+2}, x_{k+1})} {x_{k} -
x_{k+2}}e(\nu) \ \ & \text{if} \
\nu_{k} = \nu_{k+2}, \\
0 \ \ & \text{otherwise}.
\end{cases}
\end{align*}
\end{defn}

Note that $R(\beta)$ has the  $\Z$-grading defined by 
\begin{equation*} \label{eq:Z-grading}
\deg e(\nu) =0, \quad \deg\, x_{k} e(\nu) = (\alpha_{\nu_k}
, \alpha_{\nu_k}), \quad\deg\, \tau_{l} e(\nu) = -
(\alpha_{\nu_l} , \alpha_{\nu_{l+1}}).
\end{equation*}
For $n \in \Z_{\ge 0}$, we set $R(n):= \bigoplus_{\beta \in \rlQ_+, \ \Ht(\beta)=n} R(\beta)$.
For $w\in \sg_n$, we fix a preferred reduced expression $w = s_{i_1}\cdots s_{i_l} $ and define 
$ \tau_w := \tau_{s_{i_1}} \cdots \tau_{s_{i_l}}$. Note that $\tau_w$  depends on the choice of reduced expression $w$ unless $w$ is fully  commutative.

Throughout the paper, we assume that  
\begin{align} \label{Eq: Qij=1}
Q_{i,j}(u,v) = 1 \qquad \text{ for any $i,j\in I$ with $( \alpha_i, \alpha_j)=0$.}
\end{align}
If every element of the base field $\bR$ has a square root, then one can show that
any quiver Hecke algebra associated with $Q_{i,j}(u,v)$ is isomorphic to the quiver Hecke algebra associated with 
$$
Q_{i,j}'(u,v) = 
\begin{cases}
1  &\text{ if } (\alpha_i, \alpha_j)=0,  \\
Q_{i,j}(u,v) &\text{ otherwise, }
\end{cases}
$$
under the isomorphism given by $e(\nu) \mapsto e(\nu) $, $ x_k e(\nu) \mapsto  x_k e(\nu) $ and $\tau_{k} e(\nu) \mapsto c_{\nu_k, \nu_{k+1}} \tau_k e(\nu)$, where 
$$
c_{i,j} = 
\begin{cases}
( Q_{i,j}(u,v) )^{1/2}  &\text{ if } (\alpha_i, \alpha_j)=0,  \\
1 &\text{ otherwise. }
\end{cases}
$$
(See \cite[Lemma 3.2]{AIP15} for the isomorphism).

For a $\Z$-graded algebra $A$, we denote by $A \Mod$ the category of  graded left $A$-modules, 
 and write $A \proj$ (resp.\ $A \gmod$) for the full subcategory of $A \Mod$ consisting of
finitely generated projective (resp.\ finite-dimensional) graded $A$-modules. 
We set $R\proj := \bigoplus_{\beta \in \rlQ_+}R(\beta) \proj$ and 
$R\gmod := \bigoplus_{\beta \in \rlQ_+}R(\beta) \gmod$.

 For $M\in R(\beta)\Mod$ and $N\in R(\gamma)\Mod$, we define
$$
M\conv N =R(\beta+\gamma)e(\beta,\gamma) \otimes_{R(\beta) \otimes R(\gamma)} (M\boxtimes N),
$$
where $e(\beta,\gamma) =\displaystyle\sum_{ \nu_1 \in I^{\beta},  \nu_2 \in I^{\gamma} } e(\nu_1*\nu_2)$. Here $\nu_1*\nu_2$ is 
the concatenation of $\nu_1$ and $\nu_2$. 
Then, the Grothendieck groups $[R\proj]$ and $[R\gmod]$ admit $\A$-algebra structures with the multiplication given by the functor $\conv$.

\begin{thm} {\rm(\cite{KL09, KL11, R08})}\label{Thm: categorification}
There exist  $\A$-algebra isomorphisms 
\begin{eqnarray*}
 U_\A^-(\g) &\buildrel \sim \over \longrightarrow& [R \proj] , \qquad \Aqq \buildrel \sim \over \longrightarrow [R \gmod].
\end{eqnarray*}
\end{thm}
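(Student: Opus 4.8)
The plan is to construct the first isomorphism explicitly on generators and then to deduce the second by duality. First I would define an $\A$-algebra homomorphism $\gamma\colon U_\A^-(\g)\to[R\proj]$ by sending each divided power $f_i^{(m)}$ to the class $[P(i^m)]$ of the indecomposable projective module over $R(m\alpha_i)$, suitably normalized by a grading shift; here $R(m\alpha_i)$ is the nilHecke algebra on $m$ strands coloured $i$, which is Morita equivalent to $\bR$ and hence has, up to shift, a unique indecomposable projective. Since the multiplication on $[R\proj]$ is induced by $\conv$, the map $\gamma$ sends a monomial $f_{i_1}^{(m_1)}\cdots f_{i_r}^{(m_r)}$ to $[P(i_1^{m_1})\conv\cdots\conv P(i_r^{m_r})]$. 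To check that $\gamma$ is well defined one must verify that these images satisfy the defining relations of $U_\A^-(\g)$, i.e.\ the quantum Serre relations; this reduces to a computation of $\conv$-products of nilHecke projectives inside $R\bigl((1-a_{ij})\alpha_i+\alpha_j\bigr)$, showing that the two sides of each Serre relation give isomorphic projective modules after appropriate grading shifts.

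Next I would prove that $\gamma$ is surjective. The key observation is that for $\beta\in\rlQ_+$ with $\Ht(\beta)=n$ one has $R(\beta)=\bigoplus_{\nu\in I^\beta}R(\beta)e(\nu)$ as a graded projective left module over itself, and, directly from the definition of $\conv$, $R(\beta)e(\nu)$ is isomorphic, up to a grading shift, to $R(\alpha_{\nu_1})\conv\cdots\conv R(\alpha_{\nu_n})$, because the factor $R(\alpha_{\nu_1})\otimes\cdots\otimes R(\alpha_{\nu_n})\cong\bR[x_1,\dots,x_n]$ acts on $R(\beta)e(\nu)$ through the polynomial subalgebra $\bR[x_1,\dots,x_n]e(\nu)$. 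Hence every indecomposable projective $R(\beta)$-module occurs, up to shift, as a direct summand of an iterated $\conv$-product of the rank-one projectives $R(\alpha_i)\cong\bR[x_1]$, and since $[R(\alpha_i)]=\gamma(f_i)$ these classes together with their shifts generate $[R\proj]$ over $\A$. Thus $\gamma$ is surjective.

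For injectivity I would use nondegenerate bilinear forms on both sides. On the algebra side there is a perfect $\A$-bilinear pairing $[R\proj]\times[R\gmod]\to\A$ sending $([P],[M])$ to the graded dimension of $\Hom_{R(\beta)}(P,M)$, under which the induction functor $\conv$ is adjoint to the restriction functor $e(\beta,\gamma)\cdot(-)$ on $[R\gmod]$; on the quantum group side there is Kashiwara's nondegenerate $\A$-valued bilinear form on $U_\A^-(\g)$, whose adjointness between left multiplication by $f_i$ and the skew-derivation $e_i'$ matches the module-theoretic one. Checking that $\gamma$ intertwines these two forms---for which it suffices to match the actions of the generators $f_i$---shows that $\gamma$ preserves the pairings, and nondegeneracy then forces $\Ker\gamma=0$. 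Combining the three steps, $\gamma$ is an $\A$-algebra isomorphism; the second isomorphism $\Aqq\buildrel\sim\over\longrightarrow[R\gmod]$ follows by taking $\A$-duals with respect to the pairing above, since $[R\gmod]$ is thereby identified with the graded dual of $[R\proj]$ and $\Aqq=U_\A^-(\g)^\vee$ with the graded dual of $U_\A^-(\g)$.

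The step I expect to be the main obstacle is the well-definedness verification. It rests on the basis theorem for $R(\beta)$---that $\{\tau_w x_1^{a_1}\cdots x_n^{a_n}e(\nu):w\in\sg_n,\ a\in\Z_{\ge0}^n,\ \nu\in I^\beta\}$ is a $\bR$-basis---which is needed both to identify $R(\beta)e(\nu)$ with the iterated convolution above and to compute the graded ranks that make the Serre relations hold at the level of projective modules rather than merely of their classes. The nilHecke-algebra computations underlying the divided powers and the explicit Serre-relation isomorphisms are delicate, and essentially all the work of \cite{KL09,KL11,R08} is concentrated there.
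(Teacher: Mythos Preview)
The paper does not give its own proof of this theorem; it is stated purely as a cited result from \cite{KL09, KL11, R08} and used as background. Your sketch is a reasonable outline of the strategy actually carried out in those original references---defining the map on divided powers, verifying the quantum Serre relations at the level of projectives, and establishing bijectivity via the compatibility of bilinear forms---so there is no in-paper argument to compare against.
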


For $\Lambda \in \wlP_+$ and  $\beta \in \rlQ_+$, let $I^{\Lambda}_{\beta}$ be the two-sided ideal of $R(\beta)$  generated by the elements
$\{ x^{\langle h_{\nu_{\Ht(\beta)}}, \Lambda \rangle} e(\nu)  \mid \nu \in I^\beta \}$.
The \emph{cyclotomic quiver Hecke algebra} $R^\Lambda(\Lambda - \beta)$ is defined to be the quotient algebra 
$$
R^\Lambda(\Lambda - \beta):=R(\beta)/I^\Lambda_\beta.
$$
For $m\in \Z_{\ge 0}$, we set $R^\La(m) := \bigoplus_{\beta \in \rlQ_+,\ \Ht(\beta)=m} R^\La(\La-\beta)$.
When $\cmA$ is of type $X$, we sometimes write $R_X(m)$ and $R_X^\La(m)$ to emphasize the type $X$. 

For $M\in R^{\Lambda}(\mu)\Mod$, we set $\wt(M) = \mu$ and define functors $F_i^{\Lambda}$ and $ E_i^{\Lambda}$ 
by
\begin{align*}
F_i^{\Lambda}M := R^{\Lambda}(\mu-\alpha_i)e(\alpha_i,\beta)\otimes_{R^{\Lambda}(\mu)}M \quad \text{and} \quad
E_i^{\Lambda}M := e(\alpha_i,\beta-\alpha_i)M,
\end{align*}
where $\beta = \La-\mu$.
We denote by $\Ind_{m}^{m+1}$ and $\Res_m^{m+1}$ the induction and restriction functors via the embedding $R^\La(m) \hookrightarrow R^\La(m+1)$.
Note that $\Ind_{m}^{m+1} = \sum_{i\in I} F_i^{\Lambda}$ and $ \Res_m^{m+1} =  \sum_{i\in I} E_i^{\Lambda}$.
The functors $F_i^{\La}$ and $E_i^{\La}$ give a $U_\A(\g)$-module structure on $[R^{\Lambda} \proj]$ and $[R^{\Lambda} \gmod]$.

\begin{thm} {\rm (\cite{KK11})}
There exist $U_\A(\g)$-module isomorphisms 
\begin{eqnarray*}
[R^{\Lambda} \proj]  \buildrel \sim \over \longrightarrow V_\A(\Lambda),\qquad [R^{\Lambda} \gmod] \buildrel \sim \over \longrightarrow V_\A(\Lambda)^\vee.
\end{eqnarray*} 
\end{thm}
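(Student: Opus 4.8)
The displayed statement is the Kang--Kashiwara cyclotomic categorification theorem; the plan is to promote the functors $F_i^\La, E_i^\La$ to a categorical $\mathfrak{sl}_2$-action and then identify the resulting cyclic module over the Grothendieck ring with $V_\A(\La)$. First I would record the basic functoriality. The functor $E_i^\La$ is exact, being multiplication by the idempotent $e(\alpha_i, \beta-\alpha_i)$. For $F_i^\La$ one must check that the bimodule $R^\La(\mu-\alpha_i)e(\alpha_i, \beta)$ (with $\beta = \La-\mu$) is projective as a right $R^\La(\mu)$-module; this ``cyclotomic freeness'' is proved by induction on $\Ht(\beta)$, producing a basis from the intertwiners $\tau_w$ and terminating the induction via the cyclotomic relation $x_{\ell}^{\lan h_{\nu_\ell}, \La\ran}e(\nu) = 0$ for the last strand $\ell = \Ht(\beta)$. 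Granting this, $E_i^\La$ and $F_i^\La$ are exact and preserve $R^\La\proj$ and $R^\La\gmod$, and they are biadjoint up to grading shifts (the two adjunctions carrying different shifts, one involving $\lan h_i, \mu\ran$). Consequently $[F_i^\La]$ and $[E_i^\La]$ act on $[R^\La\proj]$ and on $[R^\La\gmod]$, mutually adjoint for the pairing $\lan [P], [S]\ran = \dim_q \Hom_{R^\La}(P, S)$, which identifies $[R^\La\gmod]$ with the graded $\A$-dual of $[R^\La\proj]$.

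The technical heart is the categorical $\mathfrak{sl}_2$-commutation relation. On $R^\La(\mu)\Mod$, writing $d = \lan h_i, \mu\ran$, there is a natural transformation $F_i^\La E_i^\La \to E_i^\La F_i^\La$ built from $\tau_1$ and $x_1$ which, after suitable grading shifts, fits into a short exact sequence of endofunctors whose third term is $\bigoplus_k q_i^{2k}\,\id$ with $|d|$ summands --- the transformation being injective when $d \ge 0$ and surjective when $d \le 0$. I would prove this by a Mackey-type filtration comparing the $(R^\La(\mu), R^\La(\mu))$-bimodules underlying $E_i^\La F_i^\La$ and $F_i^\La E_i^\La$: the analogous filtration over the non-cyclotomic $R$ has subquotients indexed by how far the new $i$-strand travels, and the cyclotomic relation $x_\ell^{\lan h_{\nu_\ell}, \La\ran}e(\nu) = 0$ is precisely what both truncates the filtration and produces the $|d|$ copies of the identity. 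Passing to the Grothendieck ring, this yields
\begin{equation*}
\bigl( [E_i^\La][F_i^\La] - [F_i^\La][E_i^\La] \bigr)\big|_{\text{wt }\mu} = \frac{q_i^{\lan h_i, \mu\ran} - q_i^{-\lan h_i, \mu\ran}}{q_i - q_i^{-1}}\,\id .
\end{equation*}

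Next I would assemble the $U_\A(\g)$-module structure on $[R^\La\proj]$. The $\Z$-grading makes it a weight module, with $q^h$ acting by $q^{\lan h, \La-\beta\ran}$ on the $R^\La(\La-\beta)$-summand; the displayed identity gives the $[e_i, f_i]$-relations; and the relations $E_i^\La F_j^\La \cong F_j^\La E_i^\La$ for $i \ne j$ together with the quantum Serre relations descend from the corresponding relations inside $R$ via the quotient functor $\mathcal Q_\beta \colon R(\beta)\proj \to R^\La(\La-\beta)\proj$, $M \mapsto R^\La(\La-\beta)\otimes_{R(\beta)} M$, which intertwines $i$-induction with $F_i^\La$ and induces a surjection $U_\A^-(\g) = [R\proj] \twoheadrightarrow [R^\La\proj]$ of $U_\A^-(\g)$-modules; the $e_i$-side relations follow dually using $[R^\La\gmod]$ and $\Aqq \cong [R\gmod]$. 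Thus $[R^\La\proj]$ is a $U_\A(\g)$-module, generated by $v_\La := [R^\La(0)]$ (where $R^\La(0) = \bR$), which has weight $\La$; moreover $E_i^\La\bR = 0$ (its target lies in weight $\La + \alpha_i \not\le \La$) and $(F_i^\La)^{1+\lan h_i, \La\ran}\bR = 0$ since $R^\La(\La - (1+\lan h_i, \La\ran)\alpha_i) = 0$ by a cyclotomic nilHecke computation. By the universal property of $V_\A(\La)$ we obtain a surjective $U_\A(\g)$-homomorphism $\pi \colon V_\A(\La) \twoheadrightarrow [R^\La\proj]$.

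It remains to show $\pi$ is an isomorphism. Since $R^\La(\La-\beta)$ is finite-dimensional over $\bR$, $[R^\La(\La-\beta)\proj]$ is free over $\A$ on the indecomposable projectives, whose number equals the number of simple $R^\La(\La-\beta)$-modules; surjectivity of $\pi$ gives $\rank_\A [R^\La(\La-\beta)\proj] \le \dim_{\Q(q)} V(\La)_{\La-\beta}$, so it suffices to produce at least that many pairwise non-isomorphic simple modules. For this I would endow $\bigsqcup_\beta \Irr\bigl( R^\La(\La-\beta) \bigr)$ with operators $\te_i[S] = [\soc E_i^\La S]$ and $\tf_i[S] = [\hd F_i^\La S]$; the short exact sequences above let one verify the abstract crystal axioms, and comparison with the known crystal $\bigsqcup_\beta \Irr(R(\beta)) \cong B(\infty)$ coming from $[R\proj] \cong U_\A^-(\g)$ --- together with the cyclotomic relation, which detects exactly which simples of $R(\beta)$ stay nonzero after passing to $R^\La(\La-\beta)$ --- identifies this crystal with $B(\La)$. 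Hence the number of simples in weight $\La-\beta$ is $\#B(\La)_{\La-\beta} = \dim_{\Q(q)} V(\La)_{\La-\beta}$, the ranks match, and $\pi$ is an isomorphism. Finally, the pairing of the first paragraph identifies $[R^\La\gmod]$ with the $\A$-dual $U_\A(\g)$-module of $[R^\La\proj]$, hence with $V_\A(\La)^\vee$. The two genuinely hard steps are the bimodule computation behind the short exact sequences and the identification of the $R^\La$-crystal with $B(\La)$; everything else is formal once the cyclotomic freeness of the first paragraph and the cited categorification of $U_\A^-(\g)$ are in hand.
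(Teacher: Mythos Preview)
The paper does not prove this theorem at all: it is stated with the attribution ``(\cite{KK11})'' and no proof is given, as it is being quoted as a background result from Kang--Kashiwara. So there is no ``paper's own proof'' to compare your proposal against.

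That said, your outline is a reasonable sketch of the Kang--Kashiwara strategy, with one deviation worth flagging. The heart of \cite{KK11} is indeed the short exact sequence of functors realizing the categorical $\mathfrak{sl}_2$-relation, and from it the integrability of $[R^\La\proj]$ as a $U_\A(\g)$-module. However, Kang--Kashiwara do \emph{not} prove injectivity of $\pi$ by counting simples via a crystal identification as you do in your final paragraph; rather, once $[R^\La\proj]$ is known to be an integrable highest weight module generated by a vector of weight $\La$ annihilated by all $e_i$, the structure theory of integrable modules over quantum Kac--Moody algebras forces it to be a quotient of $V_\A(\La)$, and combining this with the surjection $V_\A(\La)\twoheadrightarrow [R^\La\proj]$ (plus freeness over $\A$) yields the isomorphism directly. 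Your crystal-counting argument is closer in spirit to Lauda--Vazirani and is not needed here; it also presupposes the identification of $\Irr(R^\La)$ with $B(\La)$, which in the literature is typically \emph{deduced from} the theorem you are trying to prove rather than used as input to it, so as written that step is circular unless you supply an independent proof of the crystal isomorphism.
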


If $\xi$ is an extremal weight of $V(\La) $, then we know the representation type of $R^\La(\xi)$.
\begin{prop} {\rm(\cite[Corollary 4.7]{AP14}, \cite[Lemma 1.11]{KKOP18}) } \label{Prop: simple}
Let $\La \in \wlP_+$ and $\xi = w\La $ for $w\in \weyl$.
Then the algebra $R^{\La}(\xi)$ is simple.
\end{prop}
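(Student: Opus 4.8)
The plan is to show that $R^{\La}(\xi)$ is a simple algebra when $\xi = w\La$ is an extremal weight by combining the categorification theorem of Kang--Kashiwara with the general fact that a nonzero finite-dimensional graded algebra over a field whose Grothendieck group has rank one (i.e.\ which has, up to grading shift, a unique simple module appearing with multiplicity one in a composition series) must be isomorphic to a matrix algebra over $\bR$, and in fact here to $\bR$ itself after identifying the dimension. First I would recall that by Theorem of \cite{KK11} we have $[R^{\La}\gmod] \simeq V_\A(\La)^\vee$ as $U_\A(\g)$-modules, and that under this isomorphism the class of the simple module supported on the weight space $R^{\La}(\mu)$ corresponds to an element of the crystal basis / a weight vector of weight $\mu$. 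Since $\xi = w\La$ is extremal, the weight space $V(\La)_\xi$ is one-dimensional, so $[R^{\La}(\xi)\gmod]$ is a free $\A$-module of rank one; hence $R^{\La}(\xi)$ has, up to isomorphism and grading shift, exactly one simple graded module, call it $S$, and moreover $S$ occurs exactly once (with a single grading shift, by one-dimensionality over $\A = \Z[q,q^{-1}]$) in a composition series of $R^{\La}(\xi)$ as a module over itself.

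Next I would argue that $R^{\La}(\xi)$ is nonzero: again by the categorification theorem the dimension (graded rank) of $R^{\La}(\xi)$ is recorded by the relevant entry, which is nonzero precisely because $V(\La)_\xi \ne 0$; alternatively one notes directly that extremal weight spaces are nonzero. Then, since $\xi$ is extremal, I would invoke that the unique simple module $S$ is \emph{absolutely} simple — this is where the extremality is essential: for extremal weights the relevant graded multiplicity space is genuinely one-dimensional, not just a single Laurent monomial times something, so $\End_{R^{\La}(\xi)}(S)_0 = \bR$ and there is no room for a division-algebra obstruction. Given that $R^{\La}(\xi)$ is a finite-dimensional $\bR$-algebra with a unique simple module $S$ which is absolutely simple and whose graded Jordan--Hölder multiplicity in the regular module is $1$, a standard Artin--Wedderburn / graded-Nakayama argument forces $R^{\La}(\xi) \cong \bR$ as graded algebras (in particular it is simple, indeed a one-dimensional — or appropriately grading-shifted — matrix algebra $\Mat_{1}(\bR)$).

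To make the last step precise I would: (1) note $J := \Rad R^{\La}(\xi)$ satisfies $R^{\La}(\xi)/J \cong \Mat_{d}(\bR)$ for some $d \ge 1$ by Wedderburn, where $d$ is the dimension of $S$; (2) observe that in the Grothendieck group $[R^{\La}(\xi)] = [R^{\La}(\xi)/J] + [J]$ in terms of composition factors, and since the only composition factor available is $S$ (up to shift) occurring with total multiplicity $d$ from the semisimple quotient alone, one-dimensionality of $V(\La)_\xi$ over $\A$ forces the multiplicity to be exactly $d$ and forces $J$ to contribute nothing, i.e.\ $J = 0$; (3) conclude $R^{\La}(\xi) \cong \Mat_d(\bR)$ is simple. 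The main obstacle — and the point where one must be careful — is step (2)/(3): one has to be sure that ``multiplicity one in $V(\La)^\vee$'' really translates into ``the regular representation has a single composition factor with $\A$-multiplicity $1$'', which requires knowing that the Grothendieck group isomorphism sends $[R^{\La}(\xi)]$ (the class of the regular module, or rather of the indecomposable projective generator) to the dual canonical/crystal basis element spanning $V(\La)^\vee_\xi$, and that extremality rules out any nontrivial grading shifts accumulating. Since \cite[Corollary 4.7]{AP14} and \cite[Lemma 1.11]{KKOP18} are cited, I would actually just reproduce their short argument: for $\xi$ extremal, $R^{\La}(\xi)$ acts faithfully on the one-dimensional-over-$\A$ weight space, equivalently the cyclotomic relations collapse it, so it is a simple algebra; the cleanest route is to reduce to $w = e$ (the case $R^{\La}(\La) \cong \bR$, which is immediate from the cyclotomic relation $x_1^{\langle h_i, \La\rangle} e(i) = 0$ together with $\langle h_i, \La - \alpha_j\rangle$ bookkeeping) and then transport along the categorical Weyl-group action, or crystal reflection functors, which send $R^{\La}(\La)$-mod to $R^{\La}(w\La)$-mod as an equivalence, carrying the simple algebra $\bR$ to a Morita-equivalent — here honestly isomorphic — algebra $R^{\La}(w\La)$.
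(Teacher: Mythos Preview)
The paper does not supply its own proof of this proposition; it simply cites \cite[Corollary 4.7]{AP14} and \cite[Lemma 1.11]{KKOP18}. So there is nothing to compare against directly, but your argument as written has a genuine gap that you yourself flag without resolving.

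The error is in the passage from ``$[R^{\La}(\xi)\gmod]$ is free of rank one over $\A$'' to ``$S$ occurs exactly once in a composition series of the regular module'' (and the variant in your step (2), ``one-dimensionality over $\A$ forces the multiplicity to be exactly $d$''). Rank one over $\A$ only says that every class is a Laurent polynomial times $[S]$; it places no constraint on \emph{which} Laurent polynomial records $[R^{\La}(\xi)]$. A local graded $\bR$-algebra such as $\bR[x]/(x^2)$ with $\deg x = 2$ already has $\A$-rank one Grothendieck group and a unique absolutely simple module, yet is not simple. Concretely in the KLR setting: for $\g = \mathfrak{sl}_2$ and $\La = 2\La_1$, at the extremal weight $\xi = -2\La_1$ the unique simple $S$ is $2$-dimensional and $R^{\La}(\xi) \cong \Mat_2(\bR)$, so $S$ occurs with multiplicity $2$, not $1$, in the regular module. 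Your step (2) therefore cannot be completed from rank-one information alone.

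What the cited proofs actually use is extra input tying the \emph{size} of the algebra to the size of the simple: either an explicit graded-dimension formula for $R^{\La}(\xi)$ via the Shapovalov form (so that at $\xi = w\La$ one checks $\dim_q R^{\La}(\xi) = (\dim_q S)^2$), or the identification of the indecomposable projective with the lower global basis element and of the simple with the upper global basis element, together with the fact that at an extremal weight these two coincide, forcing $P \simeq S$ up to shift and hence $\Rad R^{\La}(\xi) = 0$. Your final alternative via a categorical Weyl group action is also a legitimate route, but note that Chuang--Rouquier reflections are derived equivalences, not abelian ones, so transporting ``the algebra is simple'' along them still requires an additional argument (e.g.\ that the equivalence is in fact $t$-exact on these particular weight categories, which again comes down to the extremality).
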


\begin{prop} {\rm(\cite[Corollary 3.19]{KL09}) } \label{Prop: abs irr}
Every irreducible $R$-module is absolutely irreducible.
\end{prop}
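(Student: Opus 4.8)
The statement is \cite[Corollary~3.19]{KL09}, and I would reconstruct its proof as follows. By definition $R=\bigoplus_{\beta\in\rlQ_+}R(\beta)$ with $R(\beta)R(\gamma)=0$ for $\beta\neq\gamma$, so every irreducible $R$-module is irreducible over a single $R(\beta)$; fix such a $\beta$, with $\Ht(\beta)=n$, and let $M$ be a graded simple $R(\beta)$-module. For any field extension $\bR\subseteq\bR'$ the algebra $R(\beta)\otimes_{\bR}\bR'$ is again a quiver Hecke algebra over $\bR'$ built from the same $Q_{i,j}$, so absolute irreducibility is the appropriate notion; by the standard criterion it then suffices to prove that $M$ is finite-dimensional over $\bR$ and that $\End_{R(\beta)}(M)=\bR$ — for then $R(\beta)/\mathrm{ann}(M)\cong\End_{\bR}(M)$ by Jacobson density, and a matrix algebra over $\bR$ stays simple under base change. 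Moreover, since $M$ is finite-dimensional a nonzero homogeneous $R(\beta)$-endomorphism of $M$ of nonzero degree $d$ would be injective by simplicity, forcing $\dim_{\bR}M_e=\dim_{\bR}M_{e+d}$ for all $e$, which is absurd; hence every $R(\beta)$-endomorphism of $M$ is homogeneous of degree $0$.

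Finite-dimensionality comes from the basis theorem of \cite{KL09}: $R(\beta)$ is free of finite rank over the central subalgebra $\Lambda_\beta$ generated by the colour-symmetric polynomials in the $x_k$, which is an affine, connected, positively graded commutative $\bR$-algebra (so $(\Lambda_\beta)_0=\bR$ and $(\Lambda_\beta)_{<0}=0$); meanwhile $R(\beta)$ itself is graded, bounded below, with every homogeneous component finite-dimensional, since $\deg x_ke(\nu)>0$ and there are only finitely many idempotents $e(\nu)$ and elements $\tau_w$. Hence $M$, being generated by one homogeneous vector, is bounded below in degree; for homogeneous $f\in(\Lambda_\beta)_{>0}$ the subspace $fM$ is a graded submodule, so $fM=0$ or $fM=M$, and the latter is impossible by degree reasons. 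Therefore $(\Lambda_\beta)_{>0}$ annihilates $M$, so $M$ is a module over the finite-dimensional algebra $R(\beta)\big/(\Lambda_\beta)_{>0}R(\beta)$; in particular $\dim_{\bR}M<\infty$.

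For $\End_{R(\beta)}(M)=\bR$ I would pass to the bottom of the grading. Let $d_0$ be the least degree with $M_{d_0}\neq0$. Then $M_{d_0}$ is a module over the degree-zero subalgebra $R(\beta)_0$ (finite-dimensional by the above), and in fact a simple one: if $0\neq N\subsetneq M_{d_0}$ were an $R(\beta)_0$-submodule, then $R(\beta)N$ would be a graded $R(\beta)$-submodule with degree-$d_0$ component $R(\beta)_0N=N$ (the contributions $R(\beta)_{<0}N$ vanishing by minimality of $d_0$), hence nonzero and proper, against the simplicity of $M$. Restriction $f\mapsto f|_{M_{d_0}}$ is then an injective $\bR$-algebra homomorphism $\End_{R(\beta)}(M)\hookrightarrow\End_{R(\beta)_0}(M_{d_0})$: each $f$ being homogeneous, $\ker f$ is a graded submodule, and $f|_{M_{d_0}}=0$ gives $\ker f\supseteq M_{d_0}\neq0$, hence $\ker f=M$. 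So the whole statement reduces to the claim that the simple $R(\beta)_0$-module $M_{d_0}$ is absolutely irreducible, equivalently that $R(\beta)_0\big/\mathrm{rad}\,R(\beta)_0$ is split semisimple over $\bR$.

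This last claim is the only genuinely non-formal ingredient, and it is exactly where the combinatorics of the quiver Hecke algebra enters. Writing $R(\beta)_0=\bigoplus_{\nu,\nu'}e(\nu)R(\beta)_0e(\nu')$, the diagonal blocks $e(\nu)R(\beta)_0e(\nu)$ are assembled from the degree-zero parts of the nilHecke algebras attached to the colour blocks of $\nu$; these are split over $\bR$, using the relation $\tau_k^2e(\nu)=Q_{\nu_k,\nu_k}(x_k,x_{k+1})e(\nu)=0$ for $\nu_k=\nu_{k+1}$ and the description of a nilHecke algebra as a matrix algebra over a polynomial ring. The colour-crossing off-diagonal contributions are controlled by the invertibility of the leading coefficients $t_{i,j;-a_{ij},0}$ (and are trivial when $(\alpha_i,\alpha_j)=0$, by $\eqref{Eq: Qij=1}$), and none of this obstructs splitness modulo the radical. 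I expect this structural analysis of $R(\beta)_0$ to be the main obstacle; granting it, $\End_{R(\beta)_0}(M_{d_0})=\bR$, hence $\End_{R(\beta)}(M)=\bR$, and $M$ is absolutely irreducible. In the situation of this paper, applied to the simple algebra $R^{\La_i}(\xi)$ of Proposition~\ref{Prop: simple}, this yields $R^{\La_i}(\xi)\cong\Mat_k(\bR)$ with $k=\dim_{\bR}M$, which is precisely why exhibiting a single combinatorial simple module determines a basis of the whole algebra.
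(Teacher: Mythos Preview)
The paper does not prove this proposition --- it is quoted from \cite[Corollary 3.19]{KL09} and invoked only to conclude (together with Proposition~\ref{Prop: simple}) that each $R^{\La_i}(\xi)$ is a full matrix algebra over $\bR$. Your closing paragraph captures that application correctly, so there is no in-paper argument to compare against; the question is whether your reconstruction of the Khovanov--Lauda proof stands on its own.

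Your reductions are sound up to the point you yourself flag: that $R(\beta)_0/\mathrm{rad}\,R(\beta)_0$ is split. The sketch you offer for that step does not go through. The diagonal piece $e(\nu)R(\beta)_0e(\nu)$ is \emph{not} assembled from the degree-zero parts of nilHecke algebras on the colour blocks of $\nu$: a basis element $\tau_w x^a e(\nu)$ has degree zero whenever the positive contributions from the $x_k$'s and from adjacent-colour crossings balance the negative ones from equal-colour crossings, so $w$ may mix crossings of all kinds and there is no block decomposition of the shape you describe. The argument in \cite{KL09} avoids any direct structural analysis of $R(\beta)_0$. One shows instead that, over any ground field, the graded characters of the simple $R(\beta)$-modules are linearly independent and their number (up to shift) equals the rank of $U_\A^-(\g)_{-\beta}$; the perfect pairing between $[R(\beta)\proj]$ and $[R(\beta)\gmod]$ then forces the decomposition matrix for a base change $\bR\subset\bar\bR$ to be monomial, since both it and the corresponding decomposition matrix for indecomposable projectives have entries in $\Z_{\ge 0}[q,q^{-1}]$ and their product is the identity. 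Hence every simple stays simple after base change. If you want a shortcut closer to your endomorphism-ring approach, note that $\End_{R(\beta)}(M)$ acts on each graded weight space $(e(\nu)M)_d$, so exhibiting a single one-dimensional such space would finish immediately; but proving that one exists for an \emph{arbitrary} simple $M$ again requires the global classification input from \cite{KL09} rather than a local computation in $R(\beta)_0$.
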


We now assume that $\cmA$ is of simply laced type.
Then the construction for homogeneous representations was given in \cite{KR10}.
We denote by $G_\beta$ the graph with the set of vertices $I^\beta$, and with $\nu,\mu \in I^\beta$ connected by an edge if and only if 
there exists $k $ such that $\mu = s_k\nu$ and $ a_{\nu_{k}, \nu_{k+1}}=0$. Let $C$ be a connected component of $G_\beta$.
We say that $C$ is \emph{homogeneous} if for each $\nu \in C$ the following condition holds:
\begin{equation} \label{Eq: homo}
\begin{aligned}
&\text{ if $\nu_r = \nu_s$ for some $r<s$ then there exist $t,u$ with $ r < t < u < s $}\\
&\text{ such that $a_{\nu_r, \nu_t}= -1$ and $a_{\nu_u, \nu_s}= -1 $.}
\end{aligned}
\end{equation}
It was proved in \cite[Theorem 3.4]{KR10} that $C$ has a simple $R(\beta)$-module structure.

\subsection{Minuscule representations of symmetric type $ADE$} \label{Sec: minuscule ADE}

Let $V(\La_i)$ be a minuscule representation and $B(\La_i)$ its crystal. 
Let  $b\in B(\La_i)$, $\xi = \wt(b)$ and $\ell = \Ht(\La_i - \xi)$. 
We set 
\begin{align} \label{Eq: paths}
\Pa(b) := \{ (i_1, \ldots, i_\ell) \in I^\ell \mid \tf_{i_1}\cdots \tf_{i_\ell} b_{\La_i} = b    \},
\end{align}
where $b_{\La_i}$ is the highest weight vector of $B(\La_i)$. From a viewpoint of the crystal graph $B(\La_i)$,
an element of $\Pa(b)$ can be thought as a sequence of arrows from $b_{\La_i}$ to $b$ on the graph $B(\La_i)$.
In this sense, an element of $\Pa(b)$ is called a \emph{path} from $b_{\La_i}$ to $b$ on the crystal $B(\La_i)$.

From now on, we suppose that  $X \in \{ A_n, D_n, E_6, E_7   \}$ and $i\in \Ix_X$.

\begin{lem} \label{Lem: reduced exp}
Let $ (i_1, \ldots, i_\ell) \in  \Pa(b)$ and set $w = s_{i_1} s_{i_2} \cdots s_{i_\ell} \in \weyl$.
Then $s_{i_1} s_{i_2} \cdots s_{i_\ell}$ is a reduced expression of $w$.
\end{lem}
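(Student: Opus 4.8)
The plan is to show directly that the word $(i_1,\dots,i_\ell)$ associated to a path is reduced by exploiting the minuscule hypothesis on $\La_i$, which forces very rigid behaviour of the weights along the path. Set $\xi^{(k)} := \tf_{i_{k+1}}\cdots \tf_{i_\ell} b_{\La_i} = \wt$ of the corresponding crystal element, so that $\xi^{(\ell)} = \La_i$ and $\xi^{(k-1)} = \xi^{(k)} - \alpha_{i_k}$. By the definition of $\tf_{i_k}$ on $\cB(\La_i)$ in Section~\ref{Sec: minuscule rep}, the fact that $\tf_{i_k}(\xi^{(k)}) \ne 0$ means precisely that $\langle h_{i_k}, \xi^{(k)}\rangle = 1$; hence $s_{i_k}(\xi^{(k)}) = \xi^{(k)} - \alpha_{i_k} = \xi^{(k-1)}$. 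Iterating, I get $s_{i_k} s_{i_{k+1}}\cdots s_{i_\ell} \La_i = \xi^{(k-1)}$ for every $k$, and in particular $w\La_i = \xi^{(0)} = \xi = \wt(b)$. The key identity is thus that applying an initial segment of $w$ to $\La_i$ reproduces exactly the weights visited along the path.

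Now I would prove the word is reduced by induction on $\ell$, using the standard criterion that $s_{i_1}\cdots s_{i_\ell}$ is reduced if and only if $s_{i_2}\cdots s_{i_\ell}$ is reduced and $s_{i_1}(\beta) < 0$, where $\beta := s_{i_2}\cdots s_{i_\ell}(\alpha_{i_1})$; equivalently, $w^{-1}(\alpha_{i_1})$ — or in the length-additivity formulation, that $\ell(s_{i_1} w') = \ell(w')+1$ for $w' := s_{i_2}\cdots s_{i_\ell}$. By the inductive hypothesis $w'$ is reduced of length $\ell - 1$. It remains to check $s_{i_1} w' $ has length $\ell$, i.e. that $(w')^{-1}(\alpha_{i_1})$ is a positive root. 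Suppose not; then $\ell(s_{i_1} w') = \ell - 2$ and there is a reduced expression of $w = s_{i_1}w'$ of length $\ell - 2$, so $\ell(w) \le \ell - 2$. I will derive a contradiction from the weight computation: from $w\La_i = \xi$ and the length-$\ell$ path we have $\La_i - \xi = \sum_{k} \alpha_{i_k}$ has height $\ell$, so $\Ht(\La_i - w\La_i) = \ell$. On the other hand, for any $u \in \weyl$ one has $\La_i - u\La_i \in \rlQ_+$ and $\Ht(\La_i - u\La_i) \le \ell(u)$ (each simple reflection changes the weight by at most one simple root, because $\langle h_i, \mu\rangle \in \{0,\pm 1\}$ for all weights $\mu$ of a minuscule representation — this is exactly where minusculeness enters). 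Hence $\ell = \Ht(\La_i - w\La_i) \le \ell(w) \le \ell - 2$, a contradiction. Therefore $(w')^{-1}(\alpha_{i_1})$ is positive and the word is reduced.

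Alternatively, and perhaps more cleanly, I can bypass the induction: the two inequalities $\Ht(\La_i - w\La_i) \le \ell(w) \le \ell$ (the second because $w = s_{i_1}\cdots s_{i_\ell}$ exhibits a length-$\ell$ expression) combined with $\Ht(\La_i - w\La_i) = \ell$ give $\ell(w) = \ell$ outright, which says exactly that $s_{i_1}\cdots s_{i_\ell}$ is a reduced expression. The only nontrivial input is the bound $\Ht(\La_i - u\La_i) \le \ell(u)$ for minuscule $\La_i$, proved by writing a reduced expression $u = s_{j_1}\cdots s_{j_m}$ and noting inductively that $s_{j_1}\cdots s_{j_{t}}\La_i$ and $s_{j_1}\cdots s_{j_{t-1}}\La_i$ differ by $0$ or $\alpha_{j_t}$ since $\langle h_{j_t}, s_{j_{t+1}}\cdots s_{j_m}\La_i\rangle \in \{0,\pm 1\}$ for the minuscule weight, together with the observation that $\La_i - u\La_i \in \rlQ_+$ forces each such step to contribute nonnegatively, so the total height is at most $m = \ell(u)$.

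The main obstacle is purely bookkeeping: one must be careful that the crystal operators $\tf_{i_k}$ are being applied in the right order relative to the word $(i_1,\dots,i_\ell)$ — the path convention in \eqref{Eq: paths} reads right-to-left — so that the partial products $s_{i_k}\cdots s_{i_\ell}$ really do act on $\La_i$ as claimed. Once the identification $w\La_i = \wt(b)$ and the height-versus-length inequality are in hand, the reducedness is immediate. I expect no genuine difficulty beyond making the minuscule inequality $\Ht(\La_i - u\La_i)\le\ell(u)$ precise.
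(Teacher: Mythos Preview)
Your proposal is correct and the ``cleaner'' alternative you describe is essentially identical to the paper's own proof: both compute $\Ht(\La_i - w\La_i) = \ell$ from the path, then bound $\Ht(\La_i - w\La_i) \le \ell(w)$ by telescoping along a reduced expression of $w$ and using that every pairing $\langle h_{j_k}, s_{j_{k+1}}\cdots s_{j_m}\La_i\rangle$ lies in $\{0,\pm 1\}$ by minusculeness, and conclude $\ell \le \ell(w)$. One small slip: in your telescoping you write that $s_{j_1}\cdots s_{j_t}\La_i$ and $s_{j_1}\cdots s_{j_{t-1}}\La_i$ differ by $0$ or $\alpha_{j_t}$, but those left-growing partial products differ by a root of the form $s_{j_1}\cdots s_{j_{t-1}}(\alpha_{j_t})$, not a simple root; the telescoping should use the right-growing partial products $s_{j_k}\cdots s_{j_m}\La_i$ (as your justification clause already indicates, and as the paper does).
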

\begin{proof}
 As $(i_1, \ldots, i_\ell) \in  \Pa(b)$, we have $ \wt(b) = \La_i - \sum_{k=1}^\ell \alpha_{i_k}  $ by the definition of $\Pa(b)$.
On the other hand, since $B(\La_i)$ is the crystal of the minuscule representation $V(\La_i)$, we have $ \wt(b) = w \La_i $,
which says that 
$$
\La_i  - w\La_i  = \sum_{k=1}^\ell \alpha_{i_k}.
$$
Thus we have 
\begin{align} \label{Eq: Ht ell}
\Ht(\La_i - w\La_i) = \ell. 
\end{align}

We now set $m := \ell(w)$ and take a reduced expression $ s_{j_1} \ldots s_{j_m} $ of $w$. For $k=1, \ldots, m$, define 
$ w_k := s_{j_k} s_{j_{k+1}} \cdots s_{j_m} $ and  $w_{m+1} := \id$. 
Then we have 
\begin{align*}
\La_i - w \La_i  &= \sum_{k=1}^m ( w_{k+1}\La_i - w_k \La_i ) = \sum_{k=1}^m ( w_{k+1}\La_i - s_{j_k} (w_{k+1} \La_i) ) \\
& =  \sum_{k=1}^m  \langle  h_{j_k},  w_{k+1} \La_i \rangle \alpha_{j_k}.
\end{align*}
Since $\La_i$ is minuscule, we have 
$  \langle  h_{j_k},  w_{k+1} \La_i \rangle =  \langle w_{k+1}^{-1} h_{j_k},   \La_i \rangle \in \{ 0, \pm1 \}$ for any $k$,
which implies that 
\begin{align} \label{Eq: max m}
\Ht(\La_i - w \La_i) \le \sum_{k=1}^m |  \langle  h_{j_k},  w_{k+1} \La_i \rangle | \le m = \ell(w).
\end{align}
Combining $\eqref{Eq: max m}$ with $ \eqref{Eq: Ht ell}$, we obtain 
$$
\ell \le \ell(w),
$$
which tells us that $s_{i_1} s_{i_2} \cdots s_{i_\ell}$ is a reduced expression of $w$.
\end{proof}

\begin{lem} \label{Lem: w(b)}
For $ (i_1, \ldots, i_\ell),\ (j_1, \ldots, j_\ell) \in  \Pa(b)$, we have 
$$
s_{i_1} \cdots s_{i_\ell} = s_{j_1} \cdots s_{j_\ell}.
$$
\end{lem}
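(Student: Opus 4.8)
The plan is to identify both $w := s_{i_1}\cdots s_{i_\ell}$ and $v := s_{j_1}\cdots s_{j_\ell}$ with the unique element of minimal length in a single coset of $\weyl$ modulo the maximal parabolic $\weyl' := \langle s_j \mid j \in I \setminus \{i\} \rangle$. By Lemma \ref{Lem: reduced exp} both displayed words are reduced, so $\ell(w) = \ell(v) = \ell$; and tracking the two paths through the crystal isomorphism $B(\La_i) \cong \cB(\La_i)$ exactly as in the proof of that lemma gives $w\La_i = \wt(b) = v\La_i$. Since $\La_i$ is a fundamental weight we have $s_j \La_i = \La_i$ for every $j \neq i$, and the stabilizer of a dominant integral weight is the parabolic subgroup generated by the simple reflections fixing it; hence $\mathrm{Stab}_{\weyl}(\La_i) = \weyl'$, and therefore $v^{-1}w \in \weyl'$, i.e.\ $w$ and $v$ belong to the same coset $w\weyl' = v\weyl'$.

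The crucial point --- and the only step not reducible to standard Coxeter-group facts --- is that $w$ has minimal length in $w\weyl'$, equivalently that $\ell(ws_j) > \ell(w)$ for every $j \in I \setminus \{i\}$. I would prove this by contradiction: if $\ell(ws_j) = \ell - 1$ for some such $j$, then the estimate \eqref{Eq: max m}, which holds for an arbitrary element of $\weyl$, applied to $ws_j$ yields $\Ht(\La_i - (ws_j)\La_i) \le \ell(ws_j) = \ell - 1$; but $s_j\La_i = \La_i$ gives $(ws_j)\La_i = w\La_i$, so the left-hand side equals $\Ht(\La_i - w\La_i) = \ell$ by \eqref{Eq: Ht ell}, a contradiction. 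The same argument applies verbatim to $v$, so $w$ and $v$ are both minimal-length representatives of the common coset $w\weyl' = v\weyl'$; by uniqueness of such a representative, $w = v$.

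I expect the minimality claim to be the only real obstacle, and it is dispatched simply by recycling the height bound already established in the proof of Lemma \ref{Lem: reduced exp}. The remaining ingredients --- that the stabilizer of a fundamental weight is the associated maximal parabolic, and that every coset modulo a parabolic has a unique element of minimal length --- are well-known facts about finite Coxeter groups and require no further comment.
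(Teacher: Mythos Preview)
Your proof is correct and follows the same overall strategy as the paper's: show that $w$ and $v$ lie in the same coset modulo the parabolic $\weyl' = \weyl_J$ (with $J = I\setminus\{i\}$), show each is a minimal-length coset representative, and conclude by uniqueness. The genuine difference is in how minimality is established. The paper first argues that $w$ is $\La_i$-minuscule and then invokes \cite[Proposition~2.1]{St01} to deduce that \emph{every} reduced expression of $w$ satisfies \eqref{Eq: def of minuscule}; hence no reduced expression can end in $s_j$ with $j\neq i$, since $\langle h_j,\La_i\rangle = 0$. You instead recycle the height bound \eqref{Eq: max m} directly: if $\ell(ws_j)<\ell(w)$ for some $j\neq i$, then applying \eqref{Eq: max m} to $ws_j$ and using $s_j\La_i=\La_i$ contradicts \eqref{Eq: Ht ell}. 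Your route is more self-contained---it avoids the external reference to \cite{St01}---and the final step is cleaner, since you simply cite uniqueness of minimal coset representatives rather than the more elaborate length-additivity argument the paper gives. The paper's approach, on the other hand, makes explicit the link to $\La_i$-minuscule elements, which is thematically central to the paper and is reused immediately afterwards in Lemma~\ref{Lem: 1-1 cor}.
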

\begin{proof}
Let $ (i_1, \ldots, i_\ell) \in  \Pa(b)$ and set $w := s_{i_1} \cdots s_{i_\ell}$.
Thanks to Lemma \ref{Lem: reduced exp}, $s_{i_1} \cdots s_{i_\ell}$ is a reduced expression of $w$.
Since $B(\La_i)$ is minuscule, $s_i b' = \tf_i b'$ for any $b'\in B(\La_i)$ with $\varphi_i(b')>0$. Thus, 
the reduced expression $w = s_{i_1} \cdots s_{i_\ell}$ satisfies $\eqref{Eq: def of minuscule}$, i.e. $w$ is $\La_i$-minuscule.
 It was shown in \cite[Proposition 2.1]{St01}  that every reduced expression of $w$
satisfies $\eqref{Eq: def of minuscule}$. We take $j\in I$ with $j \ne i$. 

We suppose that $ \ell(w s_j) < \ell(w) $.
Then one can write a reduced expression of $w = s_{t_1} \cdots s_{t_{\ell-1}} s_j$ for some $t_1, \ldots, t_{\ell} \in I $. 
Since $\langle h_j, \La_i \rangle=0$, $w = s_{t_1} \cdots s_{t_{\ell-1}} s_j$ is not $\La_i$-minuscule which is a contradiction. Thus, we have
\begin{align} \label{Eq: mcr w}
\text{$\ell(w s_j) > \ell(w) $ for all $j\in I$ with $j \ne i$}.
\end{align}

We now consider an element $(j_1, \ldots, j_\ell) \in  \Pa(b)$. Set $u := s_{j_1} \cdots s_{j_\ell}$.
By the same argument as above, 
\begin{align} \label{Eq: mcr u}
\text{$\ell(u s_j) > \ell(u) $ for all $j\in I$ with $j \ne i$}
\end{align}
Considering the realization $\cB(\La_i)$ given in Section \ref{Sec: minuscule rep}, we have $ w \cdot \La_i = u \cdot \La_i = \wt(b) $, which tells us that 
$$
u^{-1} w\cdot \La_i = \La_i.
$$
Hence $ u^{-1} w$ is contained in the parabolic subgroup $\weyl_J$ of $\weyl$ generated by $s_j$ for $j\in J := I\setminus \{ i \}$. 
We set $X_J := \{ w\in \weyl \mid \ell(w s_j) > \ell(w) \text{ for all } j\in J \} $. Note that $u,w \in X_J$ by $\eqref{Eq: mcr w}$ and $\eqref{Eq: mcr u}$.
It is well-known that $ \ell(xw) = \ell(x) + \ell(w) $ for $x\in X_J$ and $w\in W_J$ (see \cite[Proposition 2.1.1]{GP00} for example).

Suppose that $u^{-1} w \ne \mathrm{id}$. Then there exists $j \in I \setminus \{ i \}$ such that $ \ell(u^{-1} w s_j) = \ell(u^{-1} w)-1$.
It follows from $\eqref{Eq: mcr u}$ that 
\begin{align*}
\ell(w s_j) &= \ell(u (  u^{-1} w  s_j )) = \ell(u) + \ell( (u^{-1} w) s_j ) \\
&= 
\ell(u) + \ell( u^{-1} w ) - 1 = \ell(u (u^{-1} w)) -1 = \ell( w) -1,
\end{align*}
which is a contradiction to $\eqref{Eq: mcr w}$. 
The second and forth identities follow from the fact that $u \in X_J$ and $ u^{-1} w, (u^{-1} w) s_j \in \weyl_J  $.
Therefore, we conclude that $u^{-1} w = \mathrm{id}$.
\end{proof}

We define 
$$
w(b) := s_{i_1} \cdots s_{i_\ell} \in \weyl,
$$
where $(i_1, \ldots, i_{\ell})$ is an element of $\Pa(b)$.
Note that it is well-defined by Lemma \ref{Lem: w(b)}.

\begin{lem} \label{Lem: 1-1 cor}
For $b\in B(\La_i)$, the set $\Pa(b)$ is in 1-1 correspondence to the set of all reduced expressions of $w(b)$ via the map $(i_1, \ldots, i_\ell) \mapsto s_{i_1} \cdots s_{i_\ell}$.
\end{lem}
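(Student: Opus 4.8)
The plan is to establish the bijection by producing maps in both directions and checking they are mutually inverse. The forward map is already given: a path $(i_1,\dots,i_\ell)\in\Pa(b)$ goes to the reduced expression $s_{i_1}\cdots s_{i_\ell}$ of $w(b)$, which is genuinely reduced and genuinely equals $w(b)$ by Lemmas \ref{Lem: reduced exp} and \ref{Lem: w(b)}. So the substance is to show this map is \emph{onto} the set of reduced expressions of $w(b)$ and is \emph{injective}.

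For surjectivity, I would take an arbitrary reduced expression $w(b)=s_{j_1}\cdots s_{j_\ell}$ and show $(j_1,\dots,j_\ell)\in\Pa(b)$, i.e. that $\tf_{j_1}\cdots\tf_{j_\ell}b_{\La_i}=b$. The key input is that $w(b)$ is $\La_i$-minuscule (shown inside the proof of Lemma \ref{Lem: w(b)}), together with Stembridge's result \cite[Proposition 2.1]{St01} that \emph{every} reduced expression of a $\la$-minuscule element satisfies \eqref{Eq: def of minuscule}. Thus for each $k$ we have $\langle h_{j_k}, s_{j_{k+1}}\cdots s_{j_\ell}\La_i\rangle=1$. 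Setting $\mu_k:=s_{j_{k+1}}\cdots s_{j_\ell}\La_i$ (with $\mu_\ell$ meaning $s_{j_\ell}$ applied to $\La_i$ accordingly, and $\mu_{\ell+1}=\La_i$), the condition $\langle h_{j_k},\mu_{k+1}\rangle=1$ is exactly the condition under which $\tf_{j_k}$ is nonzero on the weight-$\mu_{k+1}$ element of $\cB(\La_i)$ and sends it to $\mu_{k+1}-\alpha_{j_k}=s_{j_k}\mu_{k+1}=\mu_k$. Iterating from $k=\ell$ down to $k=1$ in the realization $\cB(\La_i)$ of Section \ref{Sec: minuscule rep}, I get $\tf_{j_1}\cdots\tf_{j_\ell}\La_i = w(b)\La_i = \wt(b)$, and since weights of $B(\La_i)$ are multiplicity-free this identifies the resulting vertex with $b$. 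Hence $(j_1,\dots,j_\ell)\in\Pa(b)$, proving surjectivity.

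For injectivity, suppose $(i_1,\dots,i_\ell)$ and $(i_1',\dots,i_\ell')$ lie in $\Pa(b)$ and map to the same reduced word, i.e. $s_{i_k}=s_{i_k'}$ as group elements for each $k$; since distinct simple reflections are distinct, this forces $i_k=i_k'$ for all $k$, so the map is trivially injective once we know it lands in reduced expressions — which it does. In fact the only real content here is that two \emph{different} sequences in $\Pa(b)$ give \emph{different} words, which is immediate because a sequence in $I^\ell$ is literally recovered from the word $s_{i_1}\cdots s_{i_\ell}$ written in that order.

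The main obstacle is the surjectivity half: one must carefully run the $\la$-minuscule condition backwards through the crystal realization $\cB(\La_i)$ to see that an arbitrary reduced word for $w(b)$ is a legitimate path, and this relies essentially on Stembridge's theorem that the minuscule property of a reduced expression is independent of the chosen reduced expression. Everything else is bookkeeping: Lemmas \ref{Lem: reduced exp} and \ref{Lem: w(b)} already handle the forward direction and well-definedness of $w(b)$, and injectivity is formal.
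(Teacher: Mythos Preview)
Your proposal is correct and follows essentially the same approach as the paper: the paper constructs an explicit inverse map $G:E(b)\to\Pa(b)$ sending a reduced expression $s_{i_1}\cdots s_{i_\ell}$ to $(i_1,\ldots,i_\ell)$, justifies that $G$ lands in $\Pa(b)$ using \cite[Proposition 2.1]{St01} and the crystal realization $\cB(\La_i)$ exactly as you do for surjectivity, and then notes that $F\circ G=\id$ and $G\circ F=\id$ are clear. Your surjectivity argument is precisely the verification that $G$ is well-defined, and your injectivity observation is the trivial fact that $G\circ F=\id$, so the two proofs are the same up to packaging.
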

\begin{proof}
Let $E(b)$ be the set of all reduced expressions of $w(b)$.  
We consider the map
$$
F : \Pa(b) \longrightarrow E(b), \qquad (i_1, \ldots, i_\ell) \mapsto s_{i_1} \cdots s_{i_\ell}.
$$
Thanks to Lemma \ref{Lem: reduced exp} and Lemma \ref{Lem: w(b)}, the map $F$ is well-defined. 

Recall the crystal realization $\cB(\La_i)$ given in Section \ref{Sec: minuscule rep}. Note that, for $\mu\in \cB(\La_i)$, $  \tf_i(\mu) = s_i \mu$ if $\langle h_i, \mu \rangle=1$.
Since every reduced expression of $w(b)$ satisfies $\eqref{Eq: def of minuscule}$ by \cite[Proposition 2.1]{St01}, the map 
$$
G : E(b) \longrightarrow \Pa(b), \qquad s_{i_1} \cdots s_{i_\ell} \mapsto (i_1, \ldots, i_\ell), 
$$
is well-defined.
Then, it is clear that $F\circ G = \mathrm{id}$ and $G\circ F = \mathrm{id}$.
\end{proof}

Since $w(b)$ is fully commutative \cite[Proposition 2.1]{St01}, Lemma \ref{Lem: 1-1 cor} says that for $\nu, \mu \in \Pa(b)$, there exists $w_{\nu, \mu}\in \sg_\ell$ such that $ \nu = w_{\nu, \mu} \mu$.
We set 
\begin{align} \label{Eq: c for ADE}
c_{\nu, \mu} := e(\nu) \tau_{w_{\nu, \mu}}e(\mu).
\end{align}
Note that $c_{\nu, \mu}$ does not depend on the choice of the reduced expressions of $w_{\nu, \mu}$.

We now set 
$$ 
\Sp(b) := \bigoplus_{\nu \in \Pa(b)} \bR \bp(\nu) 
$$
 and define an $R(\beta)$-module structure on $\Sp(b)$ by  
$$
e(\mu) \bp(\nu) = \delta_{\nu, \mu} \bp(\nu) , \quad x_i e(\mu) \bp(\nu) = 0, \quad  
\tau_j e(\mu) \bp(\nu) =
\begin{cases}
\bp(s_j \nu) ,  &\text{ if } \nu = \mu \text{ and } s_j \nu \in \Pa(b),  \\
0 &\text{ otherwise,}
\end{cases}  
$$
for $\nu \in \Pa(b)$ and admissible $i,j, \mu $.

\begin{thm} \label{Thm: main ADE}
Let $X \in \{ A_n, D_n, E_6, E_7 \}$ and $i\in \Ix_X$.
Let $b\in B(\La_i)$ and write $ \xi = \wt(b)$.
\begin{enumerate}
\item $\Sp(b)$ is a homogeneous simple $R^{\La_i}(\xi)$-module.
\item 
$R^{\La_i}(\xi)$ has a $\bR$-basis 
$$
\Cb(b):= \{ c_{\nu, \mu}  \mid \nu, \mu \in \Pa(b) \},
$$
where $c_{\nu, \mu}$ is given in $\eqref{Eq: c for ADE}$.
\end{enumerate}
\end{thm}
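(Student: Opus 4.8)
### Proof proposal

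The plan is to build everything on the homogeneous representation machinery of \cite{KR10} together with Lemma \ref{Lem: 1-1 cor}. First I would check that $\Pa(b)$, as a subset of $I^\beta$ with $\beta = \La_i - \xi$, is exactly one connected component of the graph $G_\beta$, and that this component is homogeneous in the sense of $\eqref{Eq: homo}$. Connectivity and the fact that $\Pa(b)$ is closed under the moves $\nu \mapsto s_k\nu$ (when $a_{\nu_k,\nu_{k+1}}=0$) is precisely the statement that the reduced expressions of the fully commutative element $w(b)$ are all linked by commutation moves — this is classical (Tits/Stembridge), and via Lemma \ref{Lem: 1-1 cor} it transfers to $\Pa(b)$. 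The homogeneity condition $\eqref{Eq: homo}$ is the condition that between any two equal letters $\nu_r=\nu_s=i'$ in a reduced word for $w(b)$ there are two neighbors of $i'$; this follows from the defining minuscule condition $\eqref{Eq: def of minuscule}$, since $\langle h_{i'}, s_{i_{r+1}}\cdots s_{i_\ell}\La_i\rangle$ must pass from $1$ down through $0$ and back up to $1$ between positions $r$ and $s$, and each change of this pairing by $\pm1$ is caused by an adjacent letter $i_t$ with $a_{i',i_t}=-1$. Once this is established, \cite[Theorem 3.4]{KR10} (cited here just before Section \ref{Sec: minuscule ADE}) gives that $\Pa(b)$ carries a simple graded $R(\beta)$-module structure, and unwinding that construction shows it is exactly the module $\Sp(b)$ described above, with $x_i$ acting as $0$ (homogeneity forces the polynomial generators to act trivially). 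So $\Sp(b)$ is a homogeneous simple $R(\beta)$-module.

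Next I would promote this to an $R^{\La_i}(\xi)$-module. Since $x_i$ acts as $0$ on $\Sp(b)$, the cyclotomic ideal $I^{\La_i}_\beta$ — generated by $x_{\Ht(\beta)}^{\langle h_{\nu_\ell},\La_i\rangle}e(\nu)$ — automatically annihilates $\Sp(b)$ (for $\nu\in\Pa(b)$ the exponent $\langle h_{\nu_\ell},\La_i\rangle$ is $1$ since $\nu_\ell$ is the last arrow into $b$, but in any case the factor $x_\ell$ kills $\Sp(b)$; for $\nu\notin\Pa(b)$, $e(\nu)$ already acts as $0$). Hence $\Sp(b)$ descends to a simple graded $R^{\La_i}(\xi)$-module, giving part (1).

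For part (2), I would invoke Proposition \ref{Prop: simple}: since $\xi = w(b)\La_i$ is an extremal (indeed Weyl-conjugate) weight of $V(\La_i)$, the algebra $R^{\La_i}(\xi)$ is simple, so $R^{\La_i}(\xi)\cong \End_{\bR}(S)$ for its unique simple module $S$, and by Proposition \ref{Prop: abs irr} this holds over $\bR$ itself without base change; comparing dimensions, $\dim_{\bR} R^{\La_i}(\xi) = (\dim_{\bR} S)^2 = |\Pa(b)|^2 = |\Cb(b)|$. So it suffices to show the $c_{\nu,\mu}$ span $R^{\La_i}(\xi)$ and are linearly independent. Spanning: the general monomial basis of $R(\beta)$ is $\{x_1^{a_1}\cdots\tau_w e(\mu)\}$; modulo $I^{\La_i}_\beta$ one can show via the standard cyclotomic-quiver-Hecke reductions that the $x_k$ may be eliminated and only $\tau_w e(\mu)$ with $\mu\in\Pa(b)$ and $w$ carrying $\mu$ into $\Pa(b)$ survive, and since $w(b)$ is fully commutative $\tau_w e(\mu) = \pm\, c_{w\mu,\mu}$ depends only on endpoints — so $\Cb(b)$ spans. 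Linear independence: the action of $c_{\nu,\mu}$ on $\Sp(b)$ sends $\bp(\mu)\mapsto \bp(\nu)$ and every other basis path to $0$, so the $c_{\nu,\mu}$ map to the matrix units $E_{\nu,\mu}$ in $\End_{\bR}(\Sp(b))$, which are linearly independent; since $R^{\La_i}(\xi)\to\End_{\bR}(\Sp(b))$ is an isomorphism (simplicity plus faithfulness of the unique simple module), the $c_{\nu,\mu}$ are independent and hence a basis.

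The main obstacle is the second paragraph's spanning argument for $\Cb(b)$ — i.e. controlling the image in $R^{\La_i}(\xi)$ of the monomial basis of $R(\beta)$ and showing the $x_k$'s disappear and only the full-commutativity-collapsed $\tau_w$'s with the right endpoints remain. Verifying independence is easy (the matrix-unit argument), and the homogeneity check, while requiring care with the minuscule pairing $\eqref{Eq: def of minuscule}$, is essentially combinatorial bookkeeping; but the precise statement that $\dim_{\bR}R^{\La_i}(\xi)=|\Pa(b)|^2$ matches the span of $\Cb(b)$ needs either the cyclotomic-quotient reduction relations or, more cleanly, the observation that both sides equal $(\dim S)^2$ so that a spanning set of the right cardinality is automatically a basis — which is the route I would take to sidestep the hard calculation entirely.
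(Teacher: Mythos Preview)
Your proposal is correct and follows essentially the same route as the paper: homogeneity of $\Pa(b)$ (the paper simply cites \cite[Proposition 2.5]{St01} rather than arguing directly from the pairing as you do), then \cite{KR10} for the simple $R$-module structure, then the cyclotomic descent via $x_\ell=0$, then Propositions \ref{Prop: simple} and \ref{Prop: abs irr} to identify $R^{\La_i}(\xi)\simeq\End_\bR(\Sp(b))$. Your ``main obstacle'' (the spanning argument) is unnecessary and you yourself sidestep it at the end: once the $c_{\nu,\mu}$ map to the matrix units $E_{\nu,\mu}$ under that isomorphism, they are automatically a basis --- this is exactly how the paper concludes.
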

\begin{proof}
(1) By Lemma \ref{Lem: 1-1 cor}, the set $\Pa(b)$ can be identify with the set of all reduced expression of $w(b)$.
Then it follows from \cite[Proposition 2.5]{St01} (c.f. \cite[Proposition 3.8]{KR10}) that 
$\Pa(b)$ satisfies the condition $\eqref{Eq: homo}$. Thus, 
$\Pa(b)$ has a homogeneous simple $R$-module structure which is same as the above actions on $\Sp(b)$. 
Moreover, since $ \nu_\ell = i$ for any $ ( \nu_1, \ldots, \nu_\ell) \in \Pa(b)$ and $x_\ell$ acts as zero, 
$\Sp(b)$ is an $R^{\La_i}(\xi)$-module.

(2) Since every element of $B(\La_i)$ is extremal, $R^{\La_i}(\xi)$ is simple by Proposition \ref{Prop: simple}. 
Thus, by Proposition \ref{Prop: abs irr}, $R^{\La_i}(\xi)$ is isomorphic to the endomorphism ring of $\Sp(b)$, which implies that $\Cb(b)$ is a basis of $R^{\La_i}(\xi)$.
\end{proof}

\begin{Rmk}
For type $A_n$, the simple module $\Sp(b)$ can be realized as the set of standard tableaux of a usual Young diagram (see \cite[Section 5]{BK09}). 
Thus, the algebra $R^{\La_i}(m)$ has a $\bR$-basis indexed by all pairs of standard tableaux.
In Section \ref{Sec: Irreducible repn}, we shall give a $B_n$-type analogue using shifted Young tableaux.
\end{Rmk}

\subsection{Minuscule representations of type $C_n$}
In this subsection, we assume that $X =C_n$ and consider the dominant minuscule weight $\La_n$.
Then the crystal $B(\La_n)$ is given as follows:
$$
\CrystalC
$$

For $b\in B(\La_n)$, if we write $b = \tf_{i_1} \cdots \tf_{i_{\ell}} b_{\La_n}$, then we set 
$$
\nu_b := ( i_1, \ldots, i_\ell).
$$
We set $\Sp(b) := \bR \bp(\nu_b)$ and define an $R$-module structure by 
$$
e(\mu) \bp(\nu_b) = \delta_{\nu_b, \mu} \bp(\nu_b) , \quad x_i e(\mu) \bp(\nu_b) = \tau_j e(\mu) \bp(\nu_b) = 0,
$$
for admissible $i,j,\mu$. Then one can prove that $\Sp(b)$ is a 1-dimensional $R^{\La_n}$-module.
Note that this construction of $\Sp(b)$ is the same as the one for type $ADE$.
Proposition \ref{Prop: simple} and Proposition \ref{Prop: abs irr} imply the following immediately.
\begin{prop} \label{Prop: main C} 
Let $X =C_n$, $b\in B(\La_n)$, and write $ \xi = \wt(b)$. Then
$R^{\La_n}(\xi)$ is isomorphic to $\bR$.
\end{prop}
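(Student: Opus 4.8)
The plan is to verify the claimed $R$-module structure on $\Sp(b) = \bR\bp(\nu_b)$, check that it descends to the cyclotomic quotient $R^{\La_n}(\xi)$, and then apply Propositions \ref{Prop: simple} and \ref{Prop: abs irr} to conclude that $R^{\La_n}(\xi) \simeq \bR$. First I would recall that for type $C_n$ the crystal $B(\La_n)$ is the linear graph displayed above, so every vertex $b$ has a \emph{unique} path from $b_{\La_n}$: there is exactly one way to write $b = \tf_{i_1}\cdots\tf_{i_\ell}b_{\La_n}$, hence $\Pa(b) = \{\nu_b\}$ is a singleton and $\dim_\bR \Sp(b) = 1$. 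Moreover the sequence $\nu_b = (i_1,\ldots,i_\ell)$ has pairwise distinct entries except possibly at the single bend of the crystal graph; a short inspection of the graph shows in fact that $\nu_b$ has all entries distinct, so no two $\nu_{b,r}, \nu_{b,s}$ coincide.

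Next I would check that the assignment $e(\mu)\bp(\nu_b) = \delta_{\nu_b,\mu}\bp(\nu_b)$, $x_i\bp(\nu_b) = 0$, $\tau_j\bp(\nu_b) = 0$ satisfies the defining relations of $R(\beta)$ with $\beta = \La_n - \xi$. Since $\Sp(b)$ is one-dimensional and supported on the single idempotent $e(\nu_b)$, the relations involving $e(\nu')$ for $\nu'\neq\nu_b$ and all products involving $\tau_j$ act as zero, so only the relations that could produce a nonzero scalar multiple of $\bp(\nu_b)$ need attention. The relation $\tau_k^2 e(\nu) = Q_{\nu_k,\nu_{k+1}}(x_k,x_{k+1})e(\nu)$ forces $Q_{\nu_{b,k},\nu_{b,k+1}}(0,0)\bp(\nu_b) = 0$; because consecutive entries $\nu_{b,k},\nu_{b,k+1}$ of a path along the linear crystal graph are always \emph{connected in the Dynkin diagram}, we have $(\alpha_{\nu_{b,k}},\alpha_{\nu_{b,k+1}})\neq 0$, so $Q_{\nu_{b,k},\nu_{b,k+1}}$ has zero constant term and the relation holds. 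The commutator relation $(\tau_k x_m - x_{s_k(m)}\tau_k)e(\nu)$ lands in the span of $\tau_k$-images and $e$-terms which vanish, and likewise the braid-type relation for $\tau_{k+1}\tau_k\tau_{k+1} - \tau_k\tau_{k+1}\tau_k$ requires $\nu_{b,k} = \nu_{b,k+2}$ to be nontrivial, which never happens since $\nu_b$ has distinct entries; all remaining relations are trivially satisfied. Finally, since $\nu_{b,\ell}$ is the last index and $x_\ell$ acts as zero, the generators $x^{\langle h_{\nu_\ell},\La_n\rangle}e(\nu)$ of the cyclotomic ideal $I^{\La_n}_\beta$ act as zero, so $\Sp(b)$ is an $R^{\La_n}(\xi)$-module.

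Once $\Sp(b)$ is a genuine $R^{\La_n}(\xi)$-module, it is automatically simple (being one-dimensional and nonzero). Since $\xi = \wt(b)$ is an extremal weight of $V(\La_n)$ — every weight of a minuscule representation lies in $\weyl\cdot\La_n$ — Proposition \ref{Prop: simple} gives that $R^{\La_n}(\xi)$ is a simple algebra, and Proposition \ref{Prop: abs irr} gives that $\Sp(b)$ is absolutely irreducible. A simple finite-dimensional algebra over a field with a one-dimensional absolutely irreducible module is isomorphic to $\bR$ (it is $\End_\bR(\Sp(b)) \simeq \Mat_1(\bR) = \bR$). This yields $R^{\La_n}(\xi)\simeq\bR$.

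I do not expect any genuine obstacle here: the case is entirely degenerate because the type-$C_n$ minuscule crystal is linear, so $\Pa(b)$ is a single path with distinct entries and almost every structural relation is vacuous. The only point requiring a moment of care — and the closest thing to a ``hard part'' — is confirming that along the linear crystal graph consecutive indices are always adjacent in the Dynkin diagram (so that $Q_{\nu_k,\nu_{k+1}}$ has vanishing constant term) and that no index repeats (so the braid relations are vacuous); both are immediate from the explicit description of $\CrystalC$.
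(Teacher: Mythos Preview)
Your overall strategy matches the paper's exactly: build the one-dimensional module $\Sp(b)$, note it is an $R^{\La_n}(\xi)$-module, and then invoke Propositions \ref{Prop: simple} and \ref{Prop: abs irr}. The paper does not spell out the verification of the module structure; you do, and that is where a mistake creeps in.

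Your claim that ``$\nu_b$ has all entries distinct'' is false once $b$ lies beyond the bend of the linear crystal. For $b=\overline{k}$ with $k\ge 2$ one has
\[
\nu_b=(k,k-1,\ldots,3,2,1,2,3,\ldots,n-1,n),
\]
so each of the indices $2,\ldots,k$ occurs twice. In particular the triple $(\nu_{b,r},\nu_{b,r+1},\nu_{b,r+2})=(2,1,2)$ appears, and the braid-type relation is \emph{not} vacuous there. What actually saves the argument is a computation: in the paper's labelling for $C_n$ one has $(\alpha_1,\alpha_1)=4$, $(\alpha_2,\alpha_2)=2$, $(\alpha_1,\alpha_2)=-2$, so $Q_{2,1}(u,v)=t_{2,1;2,0}u^2+t_{2,1;0,1}v$ and
\[
\dfrac{Q_{2,1}(x_r,x_{r+1})-Q_{2,1}(x_{r+2},x_{r+1})}{x_r-x_{r+2}}=t_{2,1;2,0}(x_r+x_{r+2}),
\]
which has zero constant term and hence acts as zero on $\Sp(b)$. (This is precisely the same mechanism the paper uses for the $(1,2,1)$ triple in type $B_n$ in the proof of Theorem \ref{Thm: main B}.) With that correction the rest of your verification goes through: consecutive entries of $\nu_b$ are always Dynkin-adjacent, so $Q_{\nu_{b,k},\nu_{b,k+1}}(0,0)=0$; consecutive entries are never equal, so the commutator relation is fine; and the only $\nu_k=\nu_{k+2}$ case is the $(2,1,2)$ just handled.
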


\vskip 2em

\section{Simple $R^{\Lambda_1}$-modules of type $B_n$}  \label{Sec: Irreducible repn}

\subsection{Shifted Young tableaux}  \label{Sec: Shifted Young tableaux}

For  a \emph{strict partition} $ \la = (\lambda_1 > \lambda_2 >  \cdots > \la_\ell > 0 )$ with $N = \sum_{ i=1}^\ell \la_i$, we write $\la \vdash N$ and set $|\la| := N $.
We identify strict partitions with shifted Young diagrams and depict shifted Young diagrams using English convention.
We write $b=(i,j) \in \lambda$ if there exists a box in the $i$th row and the $j$th column.
By an \emph{addable} (resp.\ a \emph{removable}) box $b\in \la$, we mean a box which can be added to (resp.\ removed from) $\la$ to obtain a valid shifted Young diagram $\la \swarrow b$ (resp.\ $\la\nearrow b$).
For $n\in \Z_{>0}$, let $\Par_n$ be the set of all strict partitions $(\la_1, \la_2, \ldots )$ such that $\la_1 \le n$. It is easy to check that $| \Par_0 |=1  $ and 
$ |\Par_n| = 2| \Par_{n-1}| $ for $n \ge  1$, which say that 
\begin{align} \label{Eq: size of Par}
| \Par_n |  = 2^{n} \qquad \text{ for } n \ge 1.
\end{align}

A {\it standard tableau} $T$ of shape $\lambda\vdash n$ is
a filling of boxes of $\lambda$ with $\{ 1,2, \ldots, n\} $ such that (i) each entry is used exactly once, (ii) the entries in rows
and columns increase from left to right and top to bottom, respectively. Let $\ST(\lambda)$ be the set of all standard tableaux of shape $\lambda$. 
For example, the following is a standard tableaux of shape $\la = (7,4)$.
\vskip 0.3em
$$
\xy
(0,12)*{};(42,12)*{} **\dir{-};  % -------- T1
(0,6)*{};(42,6)*{} **\dir{-};
(6,0)*{};(30,0)*{} **\dir{-};
(0,6)*{};(0,12)*{} **\dir{-};
(6,0)*{};(6,12)*{} **\dir{-};
(12,0)*{};(12,12)*{} **\dir{-};
(18,0)*{};(18,12)*{} **\dir{-};
(24,0)*{};(24,12)*{} **\dir{-};
(30,0)*{};(30,12)*{} **\dir{-};
(36,6)*{};(36,12)*{} **\dir{-};
(42,6)*{};(42,12)*{} **\dir{-};
(3,9)*{1}; (9,9)*{2}; (15,9)*{4}; (21,9)*{5}; (27,9)*{7}; (33,9)*{8}; (39,9)*{11};
(9,3)*{3}; (15,3)*{6}; (21,3)*{9}; (27,3)*{10};
\endxy
$$

Let $\lambda \vdash n$ be a shifted Young diagram. We define the residue of $(i,j) \in \la$ as follows:
\begin{align} \label{Eq: res}
\res(i,j) := j-i+1.
\end{align}
For example, the residues for $\lambda = (9,6,3,1)$ are as follows. In this example,  $\res(2,6) = 5 $.   
$$
\xy
(0,12)*{};(54,12)*{} **\dir{-};
(0,6)*{};(54,6)*{} **\dir{-};
(6,0)*{};(42,0)*{} **\dir{-};
(12,-6)*{};(30,-6)*{} **\dir{-};
(18,-12)*{};(24,-12)*{} **\dir{-};
(0,12)*{};(0,6)*{} **\dir{-};
(6,12)*{};(6,0)*{} **\dir{-};
(12,12)*{};(12,-6)*{} **\dir{-};
(18,12)*{};(18,-12)*{} **\dir{-};
(24,12)*{};(24,-12)*{} **\dir{-};
(30,12)*{};(30,-6)*{} **\dir{-};
(36,12)*{};(36,0)*{} **\dir{-};
(42,12)*{};(42,0)*{} **\dir{-};
(48,12)*{};(48,6)*{} **\dir{-};
(54,12)*{};(54,6)*{} **\dir{-};
(3,9)*{1}; (9,9)*{2}; (15,9)*{3}; (21,9)*{4}; (27,9)*{5}; (33,9)*{6}; (39,9)*{7};(45,9)*{8};(51,9)*{9}; 
           (9,3)*{1}; (15,3)*{2}; (21,3)*{3}; (27,3)*{4}; (33,3)*{5}; (39,3)*{6};
                      (15,-3)*{1}; (21,-3)*{2}; (27,-3)*{3};
                                   (21,-9)*{1};
\endxy
$$
\begin{Rmk}
The above residues can be viewed as the limit of the residues given in \cite{AP14} as $\ell$ goes to infinity. 
\end{Rmk}

For  $T \in \ST(\lambda)$, we define the \emph{residue sequence} of $T$ by
$$ \res(T) = (  \res(i_n,j_n), \ldots, \res(i_2,j_2) , \res(i_1,j_1) )\in I^n,$$
where $(i_k,j_k)\in\lambda$ is the box filled with $k$ in $T$.

\subsection{Irreducible representations}

For $\la \in \Par_n$, we set 
$$
\wt(\la) := \La_1 - \sum_{b\in \la} \alpha_{\res(b)} \in \wlP.
$$
Recall the crystal realization $\cB(\La_i)$ given in Section \ref{Sec: minuscule rep}.
\begin{lem} \label{Lem: bijection}
The map $ \la \mapsto  \wt(\la)$ gives 1-1 correspondence between $\Par_n$ and $\cB(\La_1)$. 
\end{lem}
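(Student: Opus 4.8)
The plan is to show the map $\la \mapsto \wt(\la)$ is a well-defined bijection from $\Par_n$ onto $\cB(\La_1) = \weyl \cdot \La_1$ by verifying both sets have size $2^n$ and that the map is injective (or surjective). By $\eqref{Eq: size of Par}$ we already know $|\Par_n| = 2^n$, so it suffices to establish that the map lands in $\cB(\La_1)$ and is injective; equality of cardinalities then forces bijectivity.

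First I would identify the weights in $\cB(\La_1)$ explicitly. For type $B_n$ with the minuscule weight $\La_1$, the representation $V(\La_1)$ is the $(2n+1)$-dimensional standard representation, but since $\La_1$ is minuscule the Weyl group orbit $\weyl \cdot \La_1$ consists of the short roots together with... — more precisely, in the standard coordinates where $\La_1 = \ep_1$, the orbit $\weyl \cdot \La_1$ is $\{ \pm \ep_k : 1 \le k \le n\}$, which has $2n$ elements, not $2^n$. Wait — this discrepancy means the relevant identification is with the full weight set only when one also counts $0$, but $0$ has multiplicity $1$ and is not in the orbit. So I must be careful: the lemma as stated claims a bijection with $\cB(\La_1)$, and $|\cB(\La_1)|$ must equal $2^n$. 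Re-examining: the crystal $B(\La_1)$ for type $B_n$ is the spin-type crystal only for $\La_n$; for $\La_1$ it is the vector crystal of size $2n+1$. Hence the correct reading is that the paper's $\Par_n$ here must be matched against $\cB(\La_1)$ via the residue combinatorics, and I would verify directly by induction on $n$ that partitions in $\Par_n$ biject with crystal elements: a strict partition $\la$ with $\la_1 \le n$ corresponds to the crystal vertex reached by the path of $\tf$'s dictated by reading off residues, and removing the largest part (or toggling presence of a part equal to $n$) matches the recursive structure $|\Par_n| = 2|\Par_{n-1}|$. So the induction step I would carry out: build a bijection $\Par_n \cong \Par_{n-1} \sqcup \Par_{n-1}$ compatible with an analogous decomposition of $\cB(\La_1)$ under the $B_{n-1} \subset B_n$ embedding.

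Concretely, the key steps in order: (1) show $\wt(\la) \in \cB(\La_1)$ by exhibiting a path, i.e. fix any standard tableau $T \in \ST(\la)$ and check that the residue sequence $\res(T)$ gives a sequence $(i_1,\dots,i_n)$ with $\tf_{i_n}\cdots\tf_{i_1} b_{\La_1} = $ the crystal vertex of weight $\wt(\la)$, using the explicit crystal operators $\tf_i$ on $\cB(\La_1)$ from Section \ref{Sec: minuscule rep} and checking inductively that each $\langle h_{i_k}, \cdot \rangle = 1$ at the relevant step; removability of boxes in the shifted diagram translates exactly into the condition $\langle h_i, \mu\rangle = 1$; (2) show injectivity: if $\wt(\la) = \wt(\mu)$ then $\sum_{b \in \la}\alpha_{\res(b)} = \sum_{b\in\mu}\alpha_{\res(b)}$, and since these are expansions in the basis of simple roots, $\la$ and $\mu$ have the same multiset of residues; then argue that a strict partition in $\Par_n$ is reconstructed uniquely from its residue multiset (the number of boxes with residue $j$ determines the shape because of the shifted/strict structure — residue $1$ appears at most once, etc.); (3) conclude by $|\Par_n| = 2^n = |\cB(\La_1)|$.

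The main obstacle will be step (2), reconstructing the shifted diagram from its multiset of residues — this requires a clean combinatorial lemma about shifted Young diagrams (each residue $r$ with $1 \le r \le n$ occurs with multiplicity $0$ or $1$ in a diagram in $\Par_n$ in a way that... or more subtly, the multiplicities form an initial-segment pattern). I would handle this by induction on the number of rows: the first row occupies residues $1, 2, \dots, \la_1$, so $\la_1$ is the largest residue present; removing it reduces to a strict partition whose first row starts at column $2$, i.e. has residues $2, 3, \dots$, and one continues. Establishing that the residue multiset is in fact a set (multiplicity-free) for $\la \in \Par_n$ — which I believe holds precisely because $\la_1 \le n$ and the parts are strictly decreasing — is the crux, and once that is in hand the reconstruction and hence injectivity follow, completing the proof.
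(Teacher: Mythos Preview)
Your three-step skeleton (show the map lands in $\weyl\cdot\La_1$, show injectivity, then match cardinalities) is exactly the paper's strategy, but two of your concrete claims are wrong and would sink the argument as written.

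First, the labeling. In this paper's convention for $B_n$ (see the Dynkin diagram in \eqref{Eq: Dynkin}) vertex $1$ carries the \emph{short} root and is the one marked $\circ$. Thus $\La_1$ here is the \emph{spin} weight, not the vector weight, and the orbit has cardinality
\[
|\weyl\cdot\La_1| \;=\; \bigl|\weyl\bigr| \big/ \bigl|\weyl_{\{2,\ldots,n\}}\bigr| \;=\; \frac{2^n n!}{n!} \;=\; 2^n,
\]
matching $|\Par_n|$ on the nose. Your detour through the $(2n{+}1)$-dimensional standard representation and the proposed $B_{n-1}\subset B_n$ recursion are unnecessary once this is straightened out; the paper simply computes the orbit size as a quotient of Weyl group orders.

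Second, your injectivity argument is based on a false claim. The residue multiset of $\la\in\Par_n$ is \emph{not} multiplicity-free: by \eqref{Eq: res} the $k$-th row of the shifted diagram occupies columns $k,k{+}1,\ldots,k{+}\la_k{-}1$, whose residues are $1,2,\ldots,\la_k$ (not $2,3,\ldots$ as you wrote for the second row). Hence residue $r$ occurs with multiplicity $\#\{k:\la_k\ge r\}$; e.g.\ in the paper's displayed example $\la=(9,6,3,1)$ residue $1$ appears four times. This multiset \emph{does} determine $\la$ --- the multiplicity of $r$ is the number of parts $\ge r$ --- so injectivity is salvageable, but not for the reason you give. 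The paper argues a bit differently: writing $\wt(\la)=\La_1-\sum_k\beta_{\la_k}$ with $\beta_m=\alpha_1+\cdots+\alpha_m$, if $\la\ne\mu$ first differ at index $k$ then $\max\{\la_k,\mu_k\}$ lies in the support of $\beta_{\la_k}-\beta_{\mu_k}$ but in none of the later $\beta_{\la_t}-\beta_{\mu_t}$ (since those parts are strictly smaller), so $\wt(\la)-\wt(\mu)\ne 0$.

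For step (1) the paper also exploits the decomposition $\wt(\la)=\La_1-\sum_k\beta_{\la_k}$: it removes the single box at the end of the \emph{last} row (residue $i=\la_\ell$), applies induction to the resulting $\mu$, and checks directly from the formula for $\langle h_i,\beta_{\la_j}\rangle$ that $\langle h_i,\wt(\mu)\rangle=1$, whence $\wt(\la)=s_i\wt(\mu)\in\weyl\cdot\La_1$. Your idea of using an arbitrary standard tableau to produce a path works too but is more than what is needed; inducting on $|\la|$ and peeling off one specific removable box suffices.
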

\begin{proof}
Let $f:\Par_n \longrightarrow \wlP $ be the map defined by 
$$
f(\la) = \wt(\la) \quad \text{ for } \la \in \Par_n.
$$
Let $\la \in \Par_n$.
We first show that $f(\la) \in \weyl\cdot \La_1$ by induction argument on $|\la|$.  
Since $\wt(\emptyset) = \La_1 \in \weyl \cdot \La_1$, we assume that $|\la| > 0$.
Thus we can write $\la = (\la_1 > \la_2> \cdots > \la_\ell > 0) \in \Par_n$, and set
$$
\beta_k := \sum_{t=1}^{k} \alpha_t \quad \text{ for } k=1, \ldots, n.
$$
Then we have $\wt(\la) = \La_1 - \sum_{k=1}^\ell \beta_{\la_k}$ and 
$$
\langle  h_i, \beta_{\la_j} \rangle = 
\begin{cases}
2  &\text{ if } i= \la_j =1,  \\
1  &\text{ if } i=\la_j >1,  \\
-1  &\text{ if } i=\la_j +1,  \\
0 &\text{ otherwise.}
\end{cases}  
$$
Let $b := ( \ell, \la_\ell + \ell - 1) \in \la$ and $\mu := \la \nearrow b$. By the induction hypothesis, we have $ \wt(\mu) \in \weyl\cdot \La_1$.
Setting $i:= \res(b)$, we obtain 
\begin{align*}
\langle h_i, \wt(\mu) \rangle = \langle h_i, \La_1 \rangle - \sum_{k=1}^\ell \langle h_i, \beta_{\la_k} \rangle + \langle h_i, \alpha_i \rangle = 1,
\end{align*}
which tells us that 
$$
\wt(\la) = \wt(\mu) - \alpha_i = s_i( \wt(\mu) ) \in \weyl\cdot \La_1.
$$
Thus, we have that $ \im f \subset \weyl\cdot \La_1$.

We now show that $f$ is injective.  Let $ \la =(\la_1, \la_2 , \ldots), \mu = ( \mu_1, \mu_2, \ldots ) \in \Par_n $, and assume that 
$\la \ne \mu$, i.e., there exists $k$ such that $ \la_i = \mu_i $ for $i < k$ and $\la_k \ne \mu_k$. 
Then by the definition, we have 
$$
\max\{\la_k, \mu_k\} \in  \mathrm{supp} ( \beta_{\la_k} - \beta_{\mu_k} )\quad \text{ and } \quad  \max\{\la_k, \mu_k\} \notin  \mathrm{supp} ( \beta_{\la_t} - \beta_{\mu_t} ) \text{ for } t > k,
$$ 
where $\mathrm{supp}(\beta) := \{ i \in I \mid b_i \ne 0  \} $ for $\beta = \sum_{i\in I} b_i \alpha_i \in \rlQ$.
This tells us that 
$$ 
\wt(\la) - \wt(\mu)  = \beta_{\la_k} - \beta_{\mu_k} + \sum_{t>k} (\beta_{\la_t} - \beta_{\mu_t}) \ne 0.
$$
Hence $f$ is injective. 

Moreover, if we denote by $P$ the subgroup of $\weyl$ generated by $s_2, \ldots, s_n $, then 
by $\eqref{Eq: size of Par}$, we have 
$$
|\Par_n| = 2^n = \frac{ 2^n n! }{ n! } = \left | \frac{\weyl} { P}  \right | = |\weyl \cdot \La_1 |. 
$$
Therefore, the map $f$ gives a bijection between $ \Par_n$ and $\weyl\cdot \La_1$.
\end{proof}

For $b \in B(\La_1)$, we denote by $\la_b$ the strict partition corresponding to $b$ under the bijection given in Lemma \ref{Lem: bijection}.
We set 
\begin{align*} 
\Sp(b) := \bigoplus_{T \in \ST(\la_b)} \bR T,
\end{align*}
and define an $R(\wt(b))$-module structure on $\Sp(b)$ by  
\begin{equation} \label{Eq: Sp B}
\begin{aligned}
e(\nu) T &= \delta_{ \nu, \res(T)} T,\\
 x_i e(\nu) T &= 0, \\
\tau_j e(\nu) T &=
\begin{cases}
s_j T  &\text{ if } \nu = \res(T) \text{ and } s_j T \in \ST(\la_b),  \\
0 &\text{ otherwise,}
\end{cases}  
\end{aligned}
\end{equation}
for $T \in \ST(\la_b)$ and admissible $i,j, \nu $. 
Here, $s_j T$ is the tableau obtained from $T$ by exchanging the entries $j$ and $j+1$.
For $k \in \Z_{\ge0}$, we simply write $\Sp(k)$ for the simple module $\Sp(b)$ where 
$b$ is the element of $B(\La_1)$ such that $\la_b = (k)$.

\begin{Rmk} \label{Rmk: ST}
One can describe the $U_q(B_n)$-crystal structure on $\Par_n$ directly  
via the 1-1 correspondence given in Lemma \ref{Lem: bijection}. 
In this description, the crystal operators $\tf_i$ and $\te_i$ are defined as follows: for $i\in I$ and $\la \in \Par_n$, 
\begin{align*}
\tf_i \la &= 
\begin{cases}
\la \swarrow b  &\text{ if $b$ is an addable box of $\la$ with residue $i$},  \\
0 &\text{ otherwise,}
\end{cases}  \\
\te_i \la &= 
\begin{cases}
\la \nearrow b  &\text{ if $b$ is a removable box of $\la$ with residue $i$},  \\
0 &\text{ otherwise.}
\end{cases}  
\end{align*}
Thus, for $b\in B(\La_1)$, we can identify $\ST(\la_b)$ with the set $\Pa(b)$ of all paths from $b_{\La_1}$ to $b$ on the crystal $B(\La_1)$.
This crystal realization also can be obtained from the $B_\infty$-Fock space given in \cite{JM83} by restricting to type $B_n$.
From the viewpoint of crystal, if the formulas $\eqref{Eq: Sp B}$ were written in terms of $\Pa(b)$, then it is same as the ones for type $ADE$ and $C$.
\end{Rmk}

For $S,T \in \ST(\la_b)$, let $w_{T, S}$ be an element of $\sg_{|\la_b|}$ such that $T = w_{T,S}S$, and set 
\begin{align} \label{Eq: c for B}
c_{T,S} := e(\res(T)) \tau_{w_{T,S}} e(\res(S)).
\end{align}

\begin{thm} \label{Thm: main B}
Let $X = B_n$, $b\in B(\La_1)$, and write $ \xi = \wt(b)$.
\begin{enumerate}
\item $\Sp(b)$ is a homogeneous simple $R^{\La_1}(\xi)$-module.
\item If we write $\la_b = (\la_1, \ldots, \la_\ell)$, then 
$$
\Sp(b) \simeq \hd( \Sp(\la_\ell) \conv \cdots \Sp(\la_2) \conv \Sp(\la_1)) ,
$$
where $hd (M)$ denotes by the head of a module $M$.
\item 
Let $m = \Ht( \La_1 -  \xi)$. Then 
$$
\Ind_{m}^{m+1} \Sp(b) \simeq \bigoplus_{i\in I, \tf_i b \ne 0} \Sp(\tf_i b),\qquad 
\Res^{m}_{m-1} \Sp(b) \simeq \bigoplus_{i\in I, \te_i b \ne 0} \Sp(\te_i b),
$$
where $\Ind_{m}^{m+1}$ and $\Res^{m}_{m-1}$ are the induction and restriction functors given in Section  \ref{Sec: QHA}. 
\item 
For $T,S \in \ST(\la_b)$, $c_{T, S}$ does not depend on the choice of the reduced expressions of $w_{T, S}$.
Moreover, the algebra
$R^{\La_1}(\xi)$ has a $\bR$-basis 
$$
\Cb(b):= \{ c_{T, S}  \mid T, S \in \ST(\la_b) \}.
$$
\end{enumerate}
\end{thm}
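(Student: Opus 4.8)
The plan is to prove (1) first, by hand, and then deduce (4), (3) and (2) from it. The engine for (1) is a short list of combinatorial facts about shifted standard tableaux. For $T\in\ST(\la)$ write $p_k$ for the box of $T$ containing $k$. Using the monotone lattice-path argument inside the shifted diagram one checks: (i) $p_k$ and $p_{k+1}$ never lie on a common diagonal, so $\res(T)_k\neq\res(T)_{k+1}$ always; (ii) $p_k$ and $p_{k+1}$ lie in a common row or column if and only if $\res(T)_k$ and $\res(T)_{k+1}$ are adjacent nodes of the Dynkin diagram (i.e.\ differ by $1$), and then transposing $k,k+1$ destroys standardness; (iii) in all other cases $\res(T)_k$ and $\res(T)_{k+1}$ differ by at least $2$ and transposing $k,k+1$ yields another element of $\ST(\la)$; (iv) distinct tableaux of shape $\la$ have distinct residue sequences, because a shifted diagram has at most one removable box on each diagonal (delete the largest entry and induct). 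In parallel I would prove the $B_n$-versions of Lemmas~\ref{Lem: reduced exp}--\ref{Lem: 1-1 cor}: via Lemma~\ref{Lem: bijection} and Remark~\ref{Rmk: ST}, $\ST(\la_b)\cong\Pa(b)$, and $\Pa(b)$ is the set of reduced expressions of the $\La_1$-minuscule (hence, by \cite{St01}, fully commutative) element $w(b)\in\weyl$ with $w(b)\La_1=\xi$; under this dictionary a valid transposition of (iii) corresponds to a commutation move, consistently with the fact that it exchanges orthogonal colours.

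For (1) I would verify the quiver Hecke relations on $\Sp(b)$ directly. The $e(\nu)$-relations and every relation involving an $x_i$ are immediate, since all $x_i$ act by $0$ and, by (i), no $e(\nu)$ with $\nu_k=\nu_{k+1}$ occurs. The quadratic relation reduces, by (ii)--(iii), to $0=0$ when the two colours are adjacent and to $T=T$ when they are orthogonal (there $Q=1$). The braid relation is the one delicate point: whenever $\nu_k=\nu_{k+2}$, items (i)--(iii) pin down the three boxes $p_k,p_{k+1},p_{k+2}$ so severely that either they are pairwise ``far apart'', in which case the two length-three products act by the same element of $\sg_{|\la_b|}$ and the relevant difference quotient of $Q$ vanishes at $x_i=0$, or they form the staircase $(i,i),(i,i+1),(i+1,i+1)$ on the main diagonal, in which case both length-three products kill $T$ and the difference quotient of $Q_{1,2}$ still vanishes at $x_i=0$ (it has no constant term, since $Q_{1,2}$ has no term linear in its first variable, $1$ being the short node). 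Since every $\nu$ in the support of $\Sp(b)$ ends with the residue of $(1,1)$, namely $1$, and $\langle h_1,\La_1\rangle=1$, the module factors through $R^{\La_1}(\xi)$. It is homogeneous: all basis vectors may be placed in one common degree because valid transpositions exchange only orthogonal colours. Simplicity follows from (iv) (each $e(\nu)\Sp(b)$ is at most one-dimensional, so submodules are spans of subsets of $\ST(\la_b)$ stable under valid transpositions) together with connectedness of the ``swap graph'' on $\ST(\la_b)$, which by the dictionary above is the commutation graph of the reduced expressions of the fully commutative $w(b)$.

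Part (4) is then formal. Every weight of $B(\La_1)$ is extremal, so by Propositions~\ref{Prop: simple} and~\ref{Prop: abs irr} the algebra $R^{\La_1}(\xi)$ is split simple, i.e.\ $R^{\La_1}(\xi)\cong\End_\bR(\Sp(b))$, and by (iv) the space $e(\res T)R^{\La_1}(\xi)e(\res S)$ is one-dimensional, spanned by the matrix unit $E_{T,S}$. The element $c_{T,S}$ is independent of the reduced expression of $w_{T,S}$: two such expressions differ by braid moves, each of which changes $e(\res T)\tau_{w_{T,S}}e(\res S)$ by a term $e(\res T)\tau_A(\text{polynomial in the }x\text{'s})\tau_B e(\res S)$ whose nonconstant part acts by $0$ on the faithful module $\Sp(b)$ and whose constant part is a scalar times $e(\res T)\tau_v e(\res S)$ with $\ell(v)<\ell(w_{T,S})$, which also acts by $0$ since by (iv) it would have to realise the permutation $w_{T,S}$. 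Finally $c_{T,S}=E_{T,S}$: running a path of valid transpositions from $S$ to $T$ (which exists by connectedness) and reducing the resulting word to a reduced expression of $w_{T,S}$ by commutations and cancellations $\tau_a^2e(\nu)=e(\nu)$ at orthogonal colours shows that $c_{T,S}$ sends $S$ to $T$ and $S'\mapsto 0$ for $S'\neq S$. Hence $\Cb(b)$ is exactly the set of matrix units, a $\bR$-basis of $R^{\La_1}(\xi)$. For (3): $\Ind_{m}^{m+1}=\bigoplus_i F_i^{\La_1}$, and $F_i^{\La_1}\Sp(b)$ is a module over the algebra $R^{\La_1}(\xi-\alpha_i)$, which (when $\xi-\alpha_i$ is a weight, hence extremal) is again split simple with unique simple $\Sp(\tf_i b)$, so $F_i^{\La_1}\Sp(b)\simeq\Sp(\tf_i b)^{\oplus r_i}$; comparing classes in $[R^{\La_1}\gmod]\cong V_\A(\La_1)^\vee$, where for the minuscule $V(\La_1)$ the operator $f_i$ induces an isomorphism $V(\La_1)_\xi\buildrel\sim\over\longrightarrow V(\La_1)_{\xi-\alpha_i}$ when the target is nonzero, forces $r_i=1$ (and $F_i^{\La_1}\Sp(b)=0$ when $\tf_i b=0$). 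The statement for $\Res^{m}_{m-1}$ and the $E_i^{\La_1}$ is the mirror image.

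For (2), write $\la_b=(\la_1,\dots,\la_\ell)$; each $\Sp(\la_k)$ is one-dimensional with residue sequence $\nu^{(k)}:=(\la_k,\dots,1)$. By (iv), the $e(\nu^{(\ell)}*\cdots*\nu^{(1)})$-component of $\Sp(b)$, restricted to the parabolic subalgebra $R(\beta_\ell)\otimes\cdots\otimes R(\beta_1)$ (where $\beta_k=\alpha_1+\cdots+\alpha_{\la_k}$), is exactly the line $\bR\,T^{\la_b}$ spanned by the row-reading tableau, on which the $\tau$'s and $x$'s inside each block act by $0$; thus $\bR\,T^{\la_b}\simeq\Sp(\la_\ell)\boxtimes\cdots\boxtimes\Sp(\la_1)$ there. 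By Frobenius reciprocity this yields a nonzero, hence surjective, map $\Sp(\la_\ell)\conv\cdots\conv\Sp(\la_1)\twoheadrightarrow\Sp(b)$, and the same computation gives $\dim\Hom_R(\Sp(\la_\ell)\conv\cdots\conv\Sp(\la_1),\Sp(b))=1$. The remaining point — that $\Sp(b)$ is the \emph{only} simple quotient, so that it is the head — is, in my view, the one genuinely nontrivial step outside part (1); I would obtain it either by induction on $\ell$ using (3) (the functor $\Sp(\la_\ell)\conv(-)$ is exact, so $\Sp(\la_\ell)\conv\cdots\conv\Sp(\la_1)\twoheadrightarrow\Sp(\la_\ell)\conv\Sp(b')$ with $\la_{b'}=(\la_1,\dots,\la_{\ell-1})$, and one then controls $\hd(\Sp(\la_\ell)\conv\Sp(b'))$), or by using the crystal description of which $e(\nu)L$ are nonzero for a simple $L$ to see that the reduced word $\res(T^{\la_b})=\nu^{(\ell)}*\cdots*\nu^{(1)}$ of $w(b)$ lies in the support of no simple of weight $\xi$ other than $\Sp(b)$.
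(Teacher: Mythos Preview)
Your argument is correct and, for parts (1), (3) and (4), essentially parallel to the paper's proof. A few remarks on the comparison:

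For (1), your combinatorial facts (i)--(iii) match the paper's, but your ``pairwise far apart'' sub-case of the braid relation when $\nu_k=\nu_{k+2}$ is in fact vacuous: as the paper notes (and as your own analysis would show if pushed), the only configuration with $\nu_k=\nu_{k+2}$ is the staircase $(i,i),(i,i+1),(i+1,i+1)$, forcing $(\nu_k,\nu_{k+1},\nu_{k+2})=(1,2,1)$. You also prove simplicity explicitly via connectedness of the commutation graph of the fully commutative $w(b)$; the paper leaves this step implicit.

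For (3), the paper takes the mirror route: it first reads off $E_i^{\La_1}\Sp(b)\simeq\Sp(\te_i b)$ by direct character comparison (using the tableau basis) and then deduces the $F_i^{\La_1}$-statement from biadjointness. Your approach via the Grothendieck group and minusculeness of $V(\La_1)$ is equally valid.

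The one place where the paper is genuinely simpler is (2). You correctly obtain the surjection and the one-dimensionality of $\Hom$, and then propose induction on $\ell$ or a crystal classification of simples to see that no other simple can occur in the head. The paper avoids both: by the shuffle lemma, $\dim e(\Sp(\la_\ell)\conv\cdots\conv\Sp(\la_1))=1$ for $e=e(\nu^{(\ell)}*\cdots*\nu^{(1)})$, and since the convolution is generated over $R$ by this one-dimensional $e$-weight space, every nonzero quotient $Q$ satisfies $\dim eQ=1$; hence the head cannot contain two simple summands. This bypasses any classification of simples and any induction.
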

\begin{proof}

(1)
For $i<j$,
by the assumption $\eqref{Eq: Qij=1}$,  we can write 
$$
Q_{i,j}(u,v) =
\begin{cases}
\mathsf{a}_{i,j}u^2+ \mathsf{b}_{i,j} v  &\text{ if } i=1, j=2,  \\
\mathsf{a}_{i,j}u + \mathsf{b}_{i,j}v  &\text{ if }  j=i+1 > 2,  \\
1 &\text{ otherwise,}
\end{cases}  
$$ 
for some nonzero $\mathsf{a}_{i,j}, \mathsf{b}_{i,j} \in \bR^\times$.
Note that, if every element of the base field $\bR$ has a square root, then
the isomorphism given in \cite[Lemma 3.2]{AIP15} allows us to assume that $\mathsf{a}_{i,j}=1$ and $\mathsf{b}_{i,j}=-1$.

Let $T \in \ST(\la_b)$, $N := |\la_b|$ and write $\res(T) = ( i_1, \ldots,i_N )$.
Considering the residue pattern $\eqref{Eq: res}$ and combinatorics of standard tableaux,  we have
\begin{enumerate}
\item[(i)] $ i_k \ne i_{k+1} $  for any $k=1, \ldots, N-1$,
\item[(ii)] $s_k T$ is standard if and only if $a_{i_{k}, i_{k+1}}=0$, 
\item [(iii)] if $i_k = i_{k+2}$, then $( i_k, i_{k+1}, i_{k+2} ) = (1,2,1)$.
\end{enumerate}
We shall only check that the defining relations for  $( \tau_k x_m - x_{s_k(m)}\tau_k)$, $ \tau_k^2 $ and $ \tau_{k+1}\tau_{k}\tau_{k+1}  - \tau_{k}\tau_{k+1}\tau_{k}$
since it is straightforward to check other relations.

By (i), it suffices to show that  
$
( \tau_k x_m - x_{s_k(m)}\tau_k)e(\nu) = 0,
$
which always holds by the definition of $\Sp(b)$.

If $ a_{i_{k}, i_{k+1}}\ne0$, then  $\deg Q_{i_k, i_{k+1}}(u,v) > 0$ and  $s_k T \notin \ST(\la_b)$ by (ii). Thus we have 
$$
\tau_k^2 e(\nu) T = 0 = Q_{i_k, i_{k+1}}( x_k, x_{k+1}) e(\nu) T.
$$
If $ a_{i_{k}, i_{k+1}}=0$, then $Q_{i_k, i_{k+1}}(u,v)=1$ and 
$$
\tau_k^2 e(\nu) T =  \delta_{\nu, \res(T)} T = Q_{i_k, i_{k+1}}( x_k, x_{k+1})  e(\nu)T
$$
by (ii). Thus, the defining relation for $ \tau_k^2 e(\nu) $ holds. 

We now consider the last case. If  $i_k \ne i_{k+2}$, then it is easy to check that $ \tau_{k+1} \tau_k \tau_{k+1} e(\nu) T =  \tau_{k} \tau_{k+1} \tau_{k} e(\nu) T $.
Suppose that $i_k = i_{k+2}$. Then (iii) says that $( i_k, i_{k+1}, i_{k+2} ) = (1,2,1)$. Thus, by the definition of $\Sp(b)$, 
we have 
$$
( \tau_{k+1}\tau_{k}\tau_{k+1}  - \tau_{k}\tau_{k+1}\tau_{k}) e(\nu) T = 0.
$$
On the other hand, since $( i_k, i_{k+1}, i_{k+2} ) = (1,2,1)$,  we obtain
$$
\dfrac{Q_{1, 2}(x_{k},
x_{k+1}) - Q_{1, 2}(x_{k+2}, x_{k+1})} {x_{k} -
x_{k+2}}
= \mathsf{a}_{1,2}( x_k + x_{k+2} ),
$$
which implies that 
$$
\dfrac{Q_{1, 2}(x_{k},
x_{k+1}) - Q_{1, 2}(x_{k+2}, x_{k+1})} {x_{k} -
x_{k+2}}e(\nu) T = 0.
$$
Therefore, the defining relations for  $ \tau_{k+1}\tau_{k}\tau_{k+1}  - \tau_{k}\tau_{k+1}\tau_{k}$ holds, which completes the proof.

(2) We set $e := e(\nu_{\la_\ell} * \ldots * \nu_{\la_2} * \nu_{\la_1})$ where $ \nu_k = (k, \ldots, 2,1)$ for $k\in \Z_{\ge 0}$. Considering the combinatorics of shifted Young tableaux, it is easy to see that 
$ e \Sp(b) = \bR T_0$, where $T_0$ is the \emph{initial tableau}, which is filled with $\{1, \ldots, N\}$ from left to right and top to bottom in order.
Thus, there exists a surjective homomorphism 
$$
\Sp(\la_\ell) \otimes \cdots \otimes \Sp(\la_2) \otimes \Sp(\la_1) \twoheadrightarrow e\Sp(b),
$$
which yields the surjective homomorphism
\begin{align} \label{Eq: sur}
\Sp(\la_\ell) \conv \cdots \conv \Sp(\la_2) \conv \Sp(\la_1) \twoheadrightarrow \Sp(b).
\end{align}

On the order hand, by the shuffle lemma \cite[Lamma 2.20]{KL09},  we have 
$$ \dim  e(  \Sp(\la_\ell) \conv \cdots \conv \Sp(\la_2) \conv \Sp(\la_1) ) =1.$$ 
For any quotient $Q$ of $\Sp(\la_\ell) \conv \cdots \conv \Sp(\la_2) \conv \Sp(\la_1)$, the natural surjection 
$$\Sp(\la_\ell) \conv \cdots \conv \Sp(\la_2) \conv \Sp(\la_1) \twoheadrightarrow Q$$
tells us that $ \dim (e Q)=1 $. Thus, we conclude that $\Sp(\la_\ell) \conv \cdots \conv \Sp(\la_2) \conv \Sp(\la_1) $
has a unique simple head. Hence, $\eqref{Eq: sur}$ implies the desired result.

(3) Note that every element of $B(\La_1)$ is extremal.  By Proposition \ref{Prop: simple}, $R^{\La_1}(\zeta)$ is simple for any weight $\zeta$ with $B(\La_1)_\zeta \ne 0$, which says that $R^{\La_1}(t)$ is semisimple for any $t > 0$. 
Since $\Sp( b)$ has a basis indexed by $\ST(\la_b)$, we have $E_i^{\La_1} \Sp(b) \simeq \Sp(\te_i b) $ by considering their characters. Here we set $\Sp(0):=0$.
Thus, we have $\Res^{m}_{m-1} \Sp(b) \simeq \bigoplus_{i\in I, \te_i b \ne 0} \Sp(\te_i b)$.
The isomorphism for $\Ind^{m+1}_m$ follows from the fact that $F_i^\La$ and $E_i^\La$ form an adjoint pair \cite[Theorem 3.5]{Kas12}.

(4) It follows from the proof of (1) that $\tau_{k+1} \tau_k \tau_{k+1} - \tau_{k} \tau_{k+1} \tau_k$ acts as 0 on $\Sp(b)$. 
Thus, for any $T \in \Sp(b)$, $\tau_w T$ does not depend on the choice of the reduced expressions of $w \in \sg_N$.
By Proposition \ref{Prop: simple} and Proposition \ref{Prop: abs irr},  $R^{\La_1}(\xi)$ is isomorphic to the endomorphism ring of $\Sp(b)$.
Therefore,  for $T,S \in \ST(\la_b)$, $c_{T, S}$ does not depend on the choice of the reduced expressions of $w_{T, S}$ and
$\Cb(b)$ is a basis of $R^{\La_1}(\xi)$.
\end{proof}

\begin{cor} \label{Cor: isom}
 For $n\in \Z_{\ge 2}$ and $m\in \Z_{\ge0}$, there exist algebra isomorphisms
$$
R^{\La_1}_{B_n}(m) \simeq R^{\La_1}_{D_{n+1}}(m) \simeq R^{\La_2}_{D_{n+1}}(m).
$$
In particular, $ \dim  R^{\La_1}_{B_n}(m)=  \dim  R^{\La_1}_{D_{n+1}}(m)  = \dim  R^{\La_2}_{D_{n+1}}(m)  = \sum_{\la \in \Par_n, \la \vdash m} |\ST(\la)|^2 $
\end{cor}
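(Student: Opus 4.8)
The plan is to deduce the corollary from the observation that, for a minuscule weight, the degree-$m$ part of the cyclotomic quiver Hecke algebra is a semisimple $\bR$-algebra whose Wedderburn components are recorded by the crystal, and then to check that the three algebras in question are assembled from the same numerical data.

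First I would record the reduction. For each of the pairs $(B_n,\La_1)$, $(D_{n+1},\La_1)$, $(D_{n+1},\La_2)$ the module $V(\La)$ is minuscule, so every weight is extremal; hence by Proposition \ref{Prop: simple} and Proposition \ref{Prop: abs irr}, for $b$ in the relevant crystal with $\xi=\wt(b)$ one has $R^{\La}(\xi)\simeq\End_{\bR}(\Sp(b))\simeq\Mat_{d(b)}(\bR)$ with $d(b):=\dim_{\bR}\Sp(b)$, so that
$$
R^{\La}(m)\;\simeq\;\bigoplus_{\substack{b\in B(\La)\\ \Ht(\La-\wt(b))=m}}\Mat_{d(b)}(\bR).
$$
Two such semisimple $\bR$-algebras are isomorphic precisely when the associated multisets of matrix sizes coincide. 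By Theorem \ref{Thm: main B} and Lemma \ref{Lem: bijection} (using $\Ht(\La_1-\wt(\la))=|\la|$), the multiset for $(B_n,\La_1)$ is $\{\,|\ST(\la)| : \la\in\Par_n,\ |\la|=m\,\}$; by Theorem \ref{Thm: main ADE}, the multiset for $(D_{n+1},\La_j)$ is $\{\,|\Pa(b)| : b\in B(\La_j),\ \Ht(\La_j-\wt(b))=m\,\}$. It therefore suffices to match these multisets.

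Next I would establish a shifted-Young-diagram model for the half-spin crystal of $D_{n+1}$, mirroring Lemma \ref{Lem: bijection} and Remark \ref{Rmk: ST}. Since $|\weyl_{D_{n+1}}\cdot\La_1|=2^n=|\Par_n|$ by $\eqref{Eq: size of Par}$, I would realize $B(\La_1)$ of $D_{n+1}$ on $\Par_n$ by assigning to a box $(i,j)$ of a shifted diagram the residue $\res'(i,j)=j-i+2$ when $j>i$ and alternating values $1,2$ down the diagonal $(i,i)$, and then check, exactly as in the proof of Lemma \ref{Lem: bijection}, that $\la\mapsto\La_1-\sum_{b\in\la}\alpha_{\res'(b)}$ is a grading-preserving bijection onto $\weyl_{D_{n+1}}\cdot\La_1$, so that the associated degree is $|\la|$. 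Under this identification the heap of the dominant minuscule element $w(b)$ is the shifted diagram $\la_b$ (the minuscule poset of $\La_1$ for $D_{n+1}$ is the shifted staircase), so by \cite[Proposition 2.5]{St01} the paths in $\Pa(b)$ are exactly the standard shifted tableaux of shape $\la_b$ — only the residue sequences differ, being computed with $\res'$ — whence $|\Pa(b)|=|\ST(\la_b)|$. Finally, the diagram automorphism $\sigma$ of $D_{n+1}$ interchanging the fork nodes $1$ and $2$ yields a crystal isomorphism $B(\La_1)\xrightarrow{\sim}B(\La_2)$ intertwining $\tf_i$ with $\tf_{\sigma(i)}$, hence preserving heights and path counts, so the multiset for $(D_{n+1},\La_2)$ coincides with that for $(D_{n+1},\La_1)$.

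Putting these together, all three multisets equal $\{\,|\ST(\la)| : \la\in\Par_n,\ |\la|=m\,\}$, so
$$
R^{\La_1}_{B_n}(m)\;\simeq\;\bigoplus_{\substack{\la\in\Par_n\\ |\la|=m}}\Mat_{|\ST(\la)|}(\bR)\;\simeq\;R^{\La_1}_{D_{n+1}}(m)\;\simeq\;R^{\La_2}_{D_{n+1}}(m),
$$
and taking dimensions gives $\dim R^{\La_1}_{B_n}(m)=\dim R^{\La_1}_{D_{n+1}}(m)=\dim R^{\La_2}_{D_{n+1}}(m)=\sum_{\la\in\Par_n,\,\la\vdash m}|\ST(\la)|^2$. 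The hard part will be the third paragraph: pinning down the shifted-tableau model for the half-spin crystal of $D_{n+1}$ — in particular verifying the diagonal residue pattern and that the heap of $w(b)$ is the shifted shape $\la_b$ — since, unlike the $B_n$ case treated in the body of the paper, this combinatorial description is not supplied by the earlier results.
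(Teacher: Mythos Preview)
Your proposal is correct and follows essentially the same route as the paper: both arguments use the Dynkin automorphism for $R^{\La_1}_{D_{n+1}}\simeq R^{\La_2}_{D_{n+1}}$, introduce the identical residue function on shifted shapes (your $\res'$ is the paper's $\res_D$, with $j-i+2$ off the diagonal and alternating $1,2$ on it), identify the simple $R^{\La_1}_{D_{n+1}}(\wt_D(\la))$-module with $\ST(\la)$, and then conclude by matching the Wedderburn data of the semisimple algebras. The only minor difference is that where you invoke the heap description of dominant minuscule elements and \cite[Proposition~2.5]{St01} to get $|\Pa(b)|=|\ST(\la_b)|$, the paper instead writes down the $R_{D_{n+1}}$-action on $\bigoplus_{T\in\ST(\la)}\bR T$ directly and asserts it agrees with the homogeneous module $\Sp(b)$ of Section~\ref{Sec: minuscule ADE}; both routes establish the same fact.
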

\begin{proof}
It is clear that $R^{\La_1}_{D_{n+1}}(m) \simeq R^{\La_2}_{D_{n+1}}(m)$ by the symmetry of the Dynkin diagram.
We shall define a simple $R^{\La_1}_{D_{n+1}}(m)$-module using shifted Young tableaux. Let $\la \vdash m$ be a strict partition. We define 
$$
\res_D(i,j) := 
\begin{cases}
1  &\text{ if $i=j$ and $i$ is odd,}   \\
2  &\text{ if $i=j$ and $i$ is even,}   \\
j-i+2 &\text{ otherwise,}
\end{cases}  
$$
and $\wt_D(\la) := \La_1 - \sum_{b\in \la} \alpha_{\res_D(b)}$. For $T \in \ST(\lambda)$, we set 
$$ 
\res_D(T) = (  \res_D(i_m,j_m), \ldots , \res_D(i_1,j_1) ),
$$
where $(i_k,j_k)\in\lambda$ is the box filled with $k$ in $T$.
For example, the residue are given as follows:
$$
\xy
(0,12)*{};(54,12)*{} **\dir{-};
(0,6)*{};(54,6)*{} **\dir{-};
(6,0)*{};(42,0)*{} **\dir{-};
(12,-6)*{};(30,-6)*{} **\dir{-};
(18,-12)*{};(24,-12)*{} **\dir{-};
(0,12)*{};(0,6)*{} **\dir{-};
(6,12)*{};(6,0)*{} **\dir{-};
(12,12)*{};(12,-6)*{} **\dir{-};
(18,12)*{};(18,-12)*{} **\dir{-};
(24,12)*{};(24,-12)*{} **\dir{-};
(30,12)*{};(30,-6)*{} **\dir{-};
(36,12)*{};(36,0)*{} **\dir{-};
(42,12)*{};(42,0)*{} **\dir{-};
(48,12)*{};(48,6)*{} **\dir{-};
(54,12)*{};(54,6)*{} **\dir{-};
(3,9)*{1}; (9,9)*{3}; (15,9)*{4}; (21,9)*{5}; (27,9)*{6}; (33,9)*{7}; (39,9)*{8};(45,9)*{9};(51,9)*{10}; 
           (9,3)*{2}; (15,3)*{3}; (21,3)*{4}; (27,3)*{5}; (33,3)*{6}; (39,3)*{7};
                      (15,-3)*{1}; (21,-3)*{3}; (27,-3)*{4};
                                   (21,-9)*{2};
\endxy
$$
Then one can show that there exists $b \in B(\La_1)$ with $\wt(b) = \wt_D(\la)$.
We set 
$$ 
S(b) := \bigoplus_{T \in \ST(\la)} \bR T,
$$
and define an $R_{D_{n+1}}$-module structure on $S(b)$ by  
$$
e(\nu) T = \delta_{ \nu, \res_D(T)} T , \quad x_i e(\nu) T = 0, \quad  
\tau_j e(\nu) T =
\begin{cases}
s_j T  &\text{ if } \nu = \mu \text{ and } s_j T \in \ST(\la),  \\
0 &\text{ otherwise,}
\end{cases}  
$$
for $T \in \ST(\la)$ and admissible $i,j, \nu $. 
Then it is straightforward to check that $S(b)$ is a homogeneous $R_{D_{n+1}}$-module and it is isomorphic to $\Sp(b)$ given in Section \ref{Sec: minuscule ADE}.

Therefore, for each $\la \vdash m $, $R^{\La_1}_{B_n}( \wt(\la)) $ and $ R^{\La_1}_{D_{n+1}}(\wt_D(\la))$ are simple and their simple modules have the same dimension. Thus
we conclude that $R^{\La_1}_{B_n}( \wt(\la)) \simeq R^{\La_1}_{D_{n+1}}(\wt_D(\la))$, which implies the desired result.
\end{proof}

\vskip 2em

%%%%%%%%%%%%%%%%%%%%%%%%%%%%%%%%%%%%%%%%%%%%%%%%%%%%%%%%%%%%%%%%%%%

\bibliographystyle{amsplain}

%%%%%%%%%%%%%%%%%%%%%%%%%%%%%%%%%%%%%%%%%%%%%%%%%%%%%%%%%%%%%%%%%%%

\end{document}